\newtheorem{theorem}{Theorem}[section]
\newtheorem{lemma}[theorem]{Lemma}
\newtheorem{proposition}[theorem]{Proposition}
\newtheorem{corollary}[theorem]{Corollary}
\newtheorem{definition}[theorem]{Definition}
\newtheorem{assumption}[theorem]{Assumption}
\newtheorem{remark}[theorem]{Remark}
\begin{document}
\setlength\arraycolsep{2pt}
\title{Invariance for Rough Differential Equations}
\author{Laure COUTIN*}
\author{Nicolas MARIE**}
\address{*Institut de math\'ematiques de Toulouse, Universit\'e Paul Sabatier, Toulouse, France}
\email{laure.coutin@math.univ-toulouse.fr}
\address{**Laboratoire Modal'X, Universit\'e Paris 10, Nanterre, France}
\email{nmarie@u-paris10.fr}
\address{**Laboratoire ISTI, ESME Sudria, Paris, France}
\email{marie@esme.fr}
\keywords{Viability theorem ; Comparison theorem ; Rough differential equations ; Fractional Brownian motion ; Logistic equation}
\date{}
\maketitle
%


%
\begin{abstract}
In 1990, in It\^o's stochastic calculus framework, Aubin and Da Prato established a necessary and sufficient condition of invariance of a nonempty compact or convex subset $C$ of $\mathbb R^d$ ($d\in\mathbb N^*$) for stochastic differential equations (SDE) driven by a Brownian motion. In Lyons rough paths framework, this paper deals with an extension of Aubin and Da Prato's results to rough differential equations. A comparison theorem is provided, and the special case of differential equations driven by a fractional Brownian motion is detailed.
\end{abstract}
\tableofcontents
\noindent
\textbf{MSC2010 :} 60H10
%


%
\section{Introduction}
The invariance of a nonempty closed convex subset of $\mathbb R^d$ ($d\in\mathbb N^*$) for a
(ordinary) differential equation was solved by Nagumo in \cite{NAGUMO42}, see also \cite{AUBIN90} for a simple proof. It was obtained by Aubin and Da Prato in \cite{AD90} for stochastic differential equations. More explicit results in the special case of polyhedrons have been established in Milian \cite{MILIAN95}. In \cite{CPS13}, Cresson, Puig and Sonner have introduced a stochastic generalization of the well-known Hodgkin-Huxley neuron model satisfying the assumptions of the stochastic viability theorem of Milian \cite{MILIAN95}. On the viability and the invariance of sets for stochastic differential equations, see also Milian \cite{MILIAN93}, Gautier and Thibault \cite{GT93}, and Michta \cite{MICHTA98}. 
\\
\\
In \cite{AD98}, the results of \cite{AD90} were extended by Aubin and Da Prato to the stochastic differential inclusions. The case of stochastic controlled differential equations was studied by Da Prato and Frankowska in \cite{PF01} or more recently by Buckdahn, Quicampoix, Rainer and Teichmann in \cite{BQRT10}. An unified approach which provides a viability theorem for stochastic differential equations, backward stochastic differential equations and partial differential equations is developed in Buckdahn et al. \cite{BQRR02}.
\\
\\
The invariance of a subset of $\mathbb R^d$ for a stochastic differential equation driven by a $\alpha$-H\"older continuous process with $\alpha\in (1/2,1)$ has been already studied by several authors in the fractional calculus framework developed by Nualart and Rascanu in \cite{NR02}. In Ciotir and Rascanu \cite{CR08} and Nie and Rascanu \cite{NR11}, the authors have proved a sufficient and necessary condition for the invariance of a closed subset of $\mathbb R^d$ for a stochastic differential equation driven by a fractional Brownian motion of Hurst parameter $H\in (1/2,1)$. In \cite{MMS15}, Melnikov, Mishura and Shevchenko have proved a sufficient condition for the invariance of a smooth and nonempty subset of $\mathbb R^d$ for a stochastic differential equation driven by a mixed process containing both a Brownian motion and a $\alpha$-H\"older continuous process with $\alpha\in (1/2,1)$.
\\
\\
The rough paths theory introduced by T. Lyons in 1998 in the seminal paper \cite{LYONS98} provides a natural and powerful framework to study differential equations driven by $\alpha$-H\"older signals with $\alpha\in (0,1]$. The theory and its applications are widely studied by many authors. For instance, see the book of Friz and Victoir \cite{FV10}, the nice introduction of Friz and Hairer \cite{FH14}, or the approach of Gubinelli \cite{GUBINELLI04}.
\\
\\
The main purpose of this article is to extend the viability theorem of Aubin and Da Prato \cite{AD90} and to provide a comparison theorem for the rough differential equations. The paper deals also with an application of the viability theorem to stochastic differential equations driven by a fractional Brownian motion of Hurst parameter greater than $1/4$.
\\
\\
Let $T > 0$ be arbitrarily chosen, and consider the differential equation
\begin{equation}\label{main_equation}
y_t = y_0 +
\int_{0}^{t}b(y_s)ds +
\int_{0}^{t}\sigma(y_s)dw_s
\textrm{ ; }t\in [0,T]
\end{equation}
where, $y_0\in\mathbb R^d$, $b$ (resp. $\sigma$) is a continuous map from $\mathbb R^d$ into itself (resp. $\mathcal M_{d,e}(\mathbb R)$), and $w : [0,T]\rightarrow\mathbb R^e$ is a $\alpha$-H\"older continuous signal with $e\in\mathbb N^*$ and $\alpha\in (0,1]$.
\\
\\
At Section 2, some definitions and results on rough differential equations are stated in order to take Equation (\ref{main_equation}) in that sense. Section 3 deals with a viability theorem for Equation (\ref{main_equation}) taken in the sense of rough paths (see Friz and Victoir \cite{FV10}) and a convex or compact set. At Section 4, a comparison theorem for the rough differential equations is proved by using the viability results of Section 3. At Section 5, the viability theorem is applied to stochastic differential equations driven by a fractional Brownian motion of Hurst parameter greater than $1/4$. Finally, Appendix A is a brief survey on convex analysis.
\\
\\
For the sake of readability, all results are proved on $[0,T]$, but they can be extended on $\mathbb R_+$ via some usual localization arguments.
\\
\\
The results established in this paper could be applied in stochastic analysis itself, and in other sciences as neurology. On the one hand, in stochastic analysis, one could study the viability of rough differential inclusions as in Aubin and Da Prato \cite{AD98} in It\^o's calculus framework, or could also compare the viability condition for rough differential equations to the reflecting boundary conditions for Ito's stochastic differential equations (see Lions and Sznitman \cite{LS84}). On the other hand, together with J.M. Guglielmi who is neurologist at the American Hospital of Paris, we are studying a fractional Hodgkin-Huxley neuron model, that extends the model of Cresson et al. \cite{CPS13}, in order to model injured nerves membrane potential in some neuropathies.
\\
\\
The following notations are used throughout the paper.
\\
\\
\textbf{Notations (general) :}
\begin{itemize}
 \item The Euclidean scalar product on $\mathbb R^d$ is denoted by $\langle .,.\rangle$, and the Euclidean norm on $\mathbb R^d$ is denoted by $\|.\|$. The canonical basis of $\mathbb R^d$ is denoted by $(e_k)_{k\in\llbracket 1,d\rrbracket}$. For every $x\in\mathbb R^d$, its $j$-th coordinate with respect to $(e_k)_{k\in\llbracket 1,d\rrbracket}$ is denoted by $x^{(j)}$ for every $j\in\llbracket 1,d\rrbracket$.
 \item For every $x_0\in\mathbb R^d$ and $r\in\mathbb R_+$, $B_d(x_0,r) :=\{x\in\mathbb R^d :\|x - x_0\|\leqslant r\}$.
 \item The interior, the closure and the frontier of a set $S\subset\mathbb R^d$ are respectively denoted by $\textrm{int}(S)$, $\overline S$ and $\partial S$.
 \item For every $k\in\llbracket 1,d\rrbracket$, $D_k :=\{x\in\mathbb R^d : x^{(k)}\geqslant 0\}$.
 \item For a nonempty closed set $S\subset\mathbb R^d$, and every $x\in\mathbb R^d$, $\Pi_K(x)$ denotes the set of best approximations of $x$ by the elements of $K$ :
 \begin{equation}\label{set_of_projections}
 \Pi_S(x) :=
 \left\{x^*\in S :\|x - x^*\| =\inf_{y\in S}\|x - z\|\right\}.
 \end{equation}
 \item The distance between $x\in\mathbb R^m$ and a nonempty closed set $S\subset\mathbb R^d$ is
 \begin{displaymath}
 d_S(x) :=
 \inf_{y\in S}\|x - y\|.
 \end{displaymath}
 \item The space of the matrices of size $d\times e$ is denoted by $\mathcal M_{d,e}(\mathbb R)$. The (euclidean) matrix norm on $\mathcal M_{d,e}(\mathbb R)$ is denoted by $\|.\|_{\mathcal M_{d,e}(\mathbb R)}$. If $d = e$, then $\mathcal M_d(\mathbb R) :=\mathcal M_{d,e}(\mathbb R)$. The canonical basis of $\mathcal M_{d,e}(\mathbb R)$ is denoted by $(e_{k,l})_{(k,l)\in\llbracket 1,d\rrbracket\times\llbracket 1,e\rrbracket}$.
 \item Let $E$ and $F$ be two vector spaces. The space of the linear maps from $E$ into $F$ is denoted by $\mathcal L(E,F)$. If $E = F$, then $\mathcal L(E) :=\mathcal L(E,F)$.
 \item The space of the continuous functions from $[0,T]$ into $\mathbb R^d$ is denoted by $C^0([0,T],\mathbb R^d)$ and equipped with the uniform norm $\|.\|_{\infty,T}$.
 \item The space of the continuous functions $l$ from $(0,t_0)$ into $]0,\infty[$ with $t_0 > 0$, and such that
 \begin{displaymath}
 \lim_{t\rightarrow 0^+}\frac{t^{\beta}}{l(t)} = 0
 \textrm{ $;$ }
 \forall\beta > 0,
 \end{displaymath}
 is denoted by $\mathcal S_{t_0}$.
\end{itemize}
\textbf{Notations (rough paths).} See Friz and Victoir \cite{FV10}, Chapters 5, 7, 8 and 9 :
\begin{itemize}
 \item $\Delta_T :=\{(s,t)\in\mathbb R_{+}^{2} : 0\leqslant s < t\leqslant T\}$.
 \item The space of the $\alpha$-H\"older continuous functions from $[0,T]$ into $\mathbb R^d$ is denoted by $C^{\alpha\textrm{-H\"ol}}([0,T],\mathbb R^d)$ and equipped with the $\alpha$-H\"older semi-norm $\|.\|_{\alpha\textrm{-H\"ol},T}$ :
 \begin{displaymath}
 \|x\|_{\alpha\textrm{-H\"ol},T} :=
 \sup_{(s,t)\in\Delta_T}\frac{\|x_t - x_s\|}{|t - s|^{\alpha}}
 \textrm{ $;$ }
 \forall x\in C^{\alpha\textrm{-H\"ol}}([0,T],\mathbb R^d).
 \end{displaymath}
 \item The step-$N$ signature of $x\in C^{1\textrm{-H\"ol}}([0,T],\mathbb R^d)$ with $N\in\mathbb N^*$ is denoted by $S_N(x)$ :
 \begin{displaymath}
 S_N(x)_t :=
 \left(1,\int_{0 < u < t}dx_u,\dots,\int_{0 < u_1 <\dots < u_N < t}dx_{u_1}\otimes
 \dots\otimes dx_{u_N}\right)
 \textrm{ $;$ }
 \forall t\in [0,T].
 \end{displaymath}
 \item The step-$N$ free nilpotent group over $\mathbb R^d$ is denoted by $G^N(\mathbb R^d)$ :
 \begin{displaymath}
 G^N(\mathbb R^d) :=
 \{S_N(\gamma)_1
 \textrm{ $;$ }
 \gamma\in C^{1\textrm{-H\"ol}}([0,1],\mathbb R^d)\}.
 \end{displaymath}
 \item The space of the geometric $\alpha$-rough paths from $[0,T]$ into $G^{[1/\alpha]}(\mathbb R^d)$ is denoted by $G\Omega_{\alpha,T}(\mathbb R^d)$ :
 \begin{displaymath}
 G\Omega_{\alpha,T}(\mathbb R^d) :=
 \overline{\{S_{[1/\alpha]}(x)
 \textrm{ $;$ }x\in C^{1\textrm{-H\"ol}}([0,T],\mathbb R^d)\}}^{d_{\alpha\textrm{-H\"ol},T}}
 \end{displaymath}
 where, $d_{\alpha\textrm{-H\"ol},T}$ is the $\alpha$-H\"older distance for the Carnot-Carath\'eodory metric.
\end{itemize}
%


%
\section{Preliminaries}
The purpose of this section is to provide the appropriate formulation of Equation (\ref{main_equation}) in the rough paths framework. At the end of the section, a convergence result for the Euler scheme associated to Equation (\ref{main_equation}) is stated, and a definition of invariant sets for rough differential equations is provided.
\\
\\
The definitions and propositions stated in the major part of this section come from Lyons and Qian \cite{LQ02}, Friz and Victoir \cite{FV10}, or Friz and Hairer \cite{FH14}.
\\
\\
First, the signal $w$ is $\alpha$-H\"older continuous with $\alpha\in (0,1]$. In addition, $w$ has to satisfy the following assumption.
%


%
\begin{assumption}\label{geometric_rough_path_w}
There exists $\mathbf w\in G\Omega_{\alpha,T}(\mathbb R^e)$ such that $\mathbf w^{(1)} = w$.
\end{assumption}
\noindent
Let $W : [0,T]\rightarrow\mathbb R^{e + 1}$ be the signal defined by :
\begin{displaymath}
W_t :=
te_1 +
\sum_{k = 2}^{e + 1}
w_{t}^{(k - 1)}e_k
\textrm{ $;$ }
\forall t\in [0,T].
\end{displaymath}
By Friz and Victoir \cite{FV10}, Theorem 9.26, there exists at least one $\mathbb W\in G\Omega_{\alpha,T}(\mathbb R^{e + 1})$ such that $\mathbb W^{(1)} = W$.
\\
\\
Let us state the conditions the collection of vector fields of a rough differential equation has to satisfy in order to get at least the existence of solutions.
\\
\\
\textbf{Notation.} For every $\gamma > 0$, $\lfloor\gamma\rfloor$ is the largest integer strictly smaller than $\gamma$.
%


%
\begin{definition}\label{gamma_Lipschitz_maps}
Consider $\gamma > 0$, $l,m\in\mathbb N^*$ and a nonempty closed set $V\subset\mathbb R^l$. A map $f :\mathbb R^l\rightarrow\mathcal M_{l,m}(\mathbb R)$ is $\gamma$-Lipschitz continuous (in the sense of Stein) from $V$ into $\mathcal M_{l,m}(\mathbb R)$ if and only if :
\begin{enumerate}
 \item $f|_{V}\in C^{\lfloor\gamma\rfloor}(V,\mathcal M_{l,m}(\mathbb R))$.
 \item $f,Df,\dots,D^{\lfloor\gamma\rfloor}f$ are bounded on $V$.
 \item $D^{\lfloor\gamma\rfloor}f$ is $(\gamma -\lfloor\gamma\rfloor)$-H\"older continuous from $\mathbb R^l$ into $\mathcal L(V^{\otimes\lfloor\gamma\rfloor},\mathcal M_{l,m}(\mathbb R))$ (i.e. there exists $C > 0$ such that for every $x,y\in V$,
 \begin{displaymath}
 \|D^{\lfloor\gamma\rfloor}f(y) -
 D^{\lfloor\gamma\rfloor}f(x)\|_{\mathcal L(V^{\otimes\lfloor\gamma\rfloor},\mathcal M_{l,m}(\mathbb R))}
 \leqslant
 C\|y - x\|^{\gamma -\lfloor\gamma\rfloor}\textrm{).}
 \end{displaymath}
\end{enumerate}
The set of all such maps is denoted by $\normalfont{\textrm{Lip}}^{\gamma}(V,\mathcal M_{l,m}(\mathbb R))$.
\\
\\
The map $f$ is locally $\gamma$-Lipschitz continuous from $\mathbb R^l$ into $\mathcal M_{l,m}(\mathbb R)$, if for every nonempty compact set $K\subset\mathbb R^l$, $f$ is $\gamma$-Lipschitz continuous from $K$ into $\mathcal M_{l,m}(\mathbb R)$. The set of all such maps is denoted by $\normalfont{\textrm{Lip}}_{\normalfont{\textrm{loc}}}^{\gamma}(\mathbb R^l,\mathcal M_{l,m}(\mathbb R))$.
\end{definition}
\noindent
In the sequel, $b$ and $\sigma$ satisfy the following assumption.
%


%
\begin{assumption}\label{non_explosion_b_sigma}
There exists $\gamma\in (1/\alpha,[1/\alpha] + 1)$ such that :
\begin{enumerate}
 \item $b\in\normalfont{\textrm{Lip}}_{\normalfont{\textrm{loc}}}^{\gamma - 1}(\mathbb R^d)$ and $\sigma\in\normalfont{\textrm{Lip}}_{\normalfont{\textrm{loc}}}^{\gamma - 1}(\mathbb R^d,\mathcal M_{d,e}(\mathbb R))$.
 \item $b$ (resp. $\sigma$) is Lipschitz continuous from $\mathbb R^d$ into itself (resp. $\mathcal M_{d,e}(\mathbb R)$).
 \item $D^{[1/\alpha]}b$ (resp. $D^{[1/\alpha]}\sigma$) is $(\gamma - [1/\alpha])$-H\"older continuous from $\mathbb R^d$ into $\mathcal L((\mathbb R^d)^{\otimes [1/\alpha]},\mathbb R^d)$ (resp. $\mathcal L((\mathbb R^d)^{\otimes [1/\alpha]},\mathcal M_{d,e}(\mathbb R))$).
\end{enumerate}
\end{assumption}
\noindent
Let $f_{b,\sigma} :\mathbb R^d\rightarrow\mathcal M_{d,e + 1}(\mathbb R)$ be the map defined by :
\begin{displaymath}
f_{b,\sigma}(x) :=
\sum_{k = 1}^{d}
b^{(k)}(x)e_{k,1} +
\sum_{l = 2}^{e + 1}
\sum_{k = 1}^{d}
\sigma_{k,l}(x)e_{k,l}
\textrm{ $;$ }
\forall x\in\mathbb R^d.
\end{displaymath}
In the rough paths framework, $dy = f_{b,\sigma}(y)d\mathbb W$ with $y_0\in\mathbb R^d$ as initial condition is the appropriate formulation of Equation (\ref{main_equation}).
\\
\\
Under the Assumptions \ref{geometric_rough_path_w} and \ref{non_explosion_b_sigma}, by Friz and Victoir Theorem 10.26, Exercice 10.55 and Exercice 10.56, the rough differential equation $dy = f_{b,\sigma}(y)d\mathbb W$ with $y_0\in\mathbb R^d$ as initial condition has at least one solution $y$ on $[0,T]$. Precisely, there exists a sequence $(W^n)_{n\in\mathbb N}$ of elements of $C^{1\textrm{-H\"ol}}([0,T],\mathbb R^{e + 1})$ such that
\begin{equation}\label{signal_approximation}
\lim_{n\rightarrow\infty}
d_{\alpha\textrm{-H\"ol},T}(S_{[1/\alpha]}(W^n),\mathbb W) = 0
\end{equation}
and
\begin{equation}\label{solution_approximation}
\lim_{n\rightarrow\infty}
\|y - y^n\|_{\infty,T} = 0
\end{equation}
where, for every $n\in\mathbb N$, $y^n$ is the solution on $[0,T]$ of the ordinary differential equation $dy^n = f_{b,\sigma}(y^n)dW^n$ with $y_0$ as initial condition.
\\
\\
Moreover, if $b$ and $\sigma$ satisfy the following assumption, the solution of Equation (\ref{main_equation}) is unique and denoted by $\pi_{f_{b,\sigma}}(0,y_0,\mathbb W)$.
%


%
\begin{assumption}\label{gamma_Lipschitz_b_sigma}
$b\in\normalfont{\textrm{Lip}}_{\normalfont{\textrm{loc}}}^{\gamma}(\mathbb R^d)$ and $\sigma\in\normalfont{\textrm{Lip}}_{\normalfont{\textrm{loc}}}^{\gamma}(\mathbb R^d,\mathcal M_{d,e}(\mathbb R))$.
\end{assumption}
\noindent
Let us now define the Euler scheme for Equation (\ref{main_equation}) and state a convergence result.
\\
\\
Let $D := (t_0,\dots,t_n)$ be a dissection of $[0,T]$ with $n\in\mathbb N^*$. The Euler scheme $\widehat y^n := (\widehat y_{t_0}^{n},\dots,\widehat y_{t_n}^{n})$ for Equation (\ref{main_equation}) along the dissection $D$ is defined by
\begin{displaymath}
\widehat y_{t_k}^{n} :=
\mathfrak E^{\mathbb W_{t_{k - 1},t_k}}\circ
\dots\circ\mathfrak E^{\mathbb W_{t_0,t_1}}y_0
\textrm{ $;$ }
\forall k\in\llbracket 1,n\rrbracket
\end{displaymath}
with
\begin{displaymath}
\mathfrak E^gx :=
x +\mathcal E_{f_{b,\sigma}}(x,g)
\end{displaymath}
and
\begin{displaymath}
\mathcal E_{f_{b,\sigma}}(x,g) :=
\sum_{k = 1}^{[1/\alpha]}
\sum_{i_1,\dots,i_k = 1}^{e + 1}
f_{b,\sigma,i_1}\dots f_{b,\sigma,i_k}I(x)g^{(k),i_1,\dots,i_k}
\end{displaymath}
for every $g\in G^{[1/\alpha]}(\mathbb R^{e + 1})$ and $x\in\mathbb R^d$, and where $I$ denotes the identity map from $\mathbb R^d$ into itself.
%


%
\begin{theorem}\label{Euler_scheme_convergence}
Let $D := (t_0,\dots,t_n)$ be a dissection of $[0,t]$ with $t\in [0,T]$ and $n\in\mathbb N^*$. Under the Assumptions \ref{non_explosion_b_sigma} and \ref{gamma_Lipschitz_b_sigma}, there exists a constant $C > 0$ depending only on $\alpha$, $\gamma$, $f_{b,\sigma}$ and $\|\mathbb W\|_{\alpha\normalfont{\textrm{-H\"ol}},T}$ such that
\begin{displaymath}
\|\pi_{f_{b,\sigma}}(0,y_0;\mathbb W)_t -\widehat y_{t}^{n}\|
\leqslant Ct|D|^{\theta - 1}
\end{displaymath}
where, $\theta := (\lfloor\gamma\rfloor + 1)\alpha > 1$ and $|D|$ is the mesh of $D$.
\end{theorem}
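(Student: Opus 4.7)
The plan is to reduce the global error bound to a local error estimate on a single dissection interval, combined with the Lipschitz continuity of the RDE flow, via a standard telescoping argument.

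First I would state and prove the local error estimate: for any $0 \leq s < u \leq T$ and any $x \in \mathbb R^d$, one has
\begin{displaymath}
\|\pi_{f_{b,\sigma}}(s, x; \mathbb W)_u - \mathfrak E^{\mathbb W_{s,u}} x\| \leq C_1 (u-s)^{\theta},
\end{displaymath}
where $C_1$ depends only on $\alpha$, $\gamma$, $f_{b,\sigma}$ and $\|\mathbb W\|_{\alpha\textrm{-H\"ol},T}$. The idea is that, thanks to Assumption \ref{gamma_Lipschitz_b_sigma}, iterated applications of the vector fields $f_{b,\sigma,i_1}\cdots f_{b,\sigma,i_k} I$ are bounded and Lipschitz for $k \leq [1/\alpha]$, so the Euler step $\mathfrak E^{\mathbb W_{s,u}} x$ coincides with the truncated step-$[1/\alpha]$ Taylor expansion of the solution. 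The remainder of this expansion is controlled by the $(\lfloor\gamma\rfloor+1)$-th order term, whose Carnot--Carath\'eodory size is of order $|u-s|^{(\lfloor\gamma\rfloor+1)\alpha} = |u-s|^\theta$ on geometric $\alpha$-rough paths (this is the Davie-type lemma underlying Friz--Victoir's Theorem 10.30).

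Next I would use the Lipschitz continuity of the time-$u$ flow $x \mapsto \pi_{f_{b,\sigma}}(s, x; \mathbb W)_u$. Under Assumption \ref{gamma_Lipschitz_b_sigma}, this flow is Lipschitz with a constant of the form $L_{s,u} \leq 1 + C_2 (u-s)^\alpha$ on short intervals (again from the rough Taylor expansion applied to the difference of two solutions), and hence satisfies $\prod_{k=1}^n L_{t_{k-1},t_k} \leq \exp(C_2 T \|\mathbb W\|_{\alpha\textrm{-H\"ol},T}^{\cdot}) =: C_3$ uniformly in the dissection. Applying the flow property $\pi_{f_{b,\sigma}}(0, y_0; \mathbb W)_{t_k} = \pi_{f_{b,\sigma}}(t_{k-1}, \pi_{f_{b,\sigma}}(0, y_0; \mathbb W)_{t_{k-1}}; \mathbb W)_{t_k}$, I would telescope:
\begin{displaymath}
\pi_{f_{b,\sigma}}(0, y_0; \mathbb W)_{t} - \widehat y_t^n = \sum_{k=1}^n \mathfrak F_k,
\end{displaymath}
where $\mathfrak F_k$ is the error introduced at step $k$ and propagated by the flow over $[t_k, t]$. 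Bounding each $\mathfrak F_k$ by the local estimate $C_1(t_k - t_{k-1})^\theta$ times the flow Lipschitz constant over $[t_k, t]$, one obtains
\begin{displaymath}
\|\pi_{f_{b,\sigma}}(0, y_0; \mathbb W)_t - \widehat y_t^n\| \leq C_1 C_3 \sum_{k=1}^n (t_k - t_{k-1})^\theta \leq C_1 C_3 \, |D|^{\theta - 1} \sum_{k=1}^n (t_k - t_{k-1}) = C \, t \, |D|^{\theta - 1},
\end{displaymath}
which is the claimed bound, with $C$ depending only on the stated quantities.

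The main obstacle is the local error estimate: one has to justify the Taylor-type expansion of an RDE solution up to order $[1/\alpha]$ with remainder of order $|u-s|^\theta$, which requires the full $\gamma$-Lipschitz regularity of $f_{b,\sigma}$ (not merely the $(\gamma-1)$-Lipschitz regularity needed for existence in Assumption \ref{non_explosion_b_sigma}). Once this estimate is in hand, together with the short-time Lipschitz estimate for the flow (which follows from the same expansion applied to two initial conditions), the telescoping step and the Hölder-exponent calculation $\sum (t_k-t_{k-1})^\theta \leq |D|^{\theta-1} \sum (t_k-t_{k-1})$ are routine.
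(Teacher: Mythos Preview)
The paper does not give its own proof of this theorem: it simply cites Friz and Victoir \cite{FV10}, Theorem 10.30. Your proposal sketches precisely the Davie-type argument that underlies that theorem (local one-step error of order $|u-s|^{\theta}$, Lipschitz continuity of the RDE flow, then a telescoping sum with the elementary bound $\sum_k (t_k-t_{k-1})^{\theta}\le |D|^{\theta-1}t$), so there is nothing to compare: your outline is correct and is exactly what the cited reference does.

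One small caveat: your constants $C_1$ and $C_3$ are asserted to be uniform in the starting point $x$, but Assumption \ref{gamma_Lipschitz_b_sigma} only gives \emph{local} $\gamma$-Lipschitz regularity of $f_{b,\sigma}$. To make the argument complete you should first invoke Assumption \ref{non_explosion_b_sigma} (global Lipschitz of $b,\sigma$ together with the global H\"older bound on $D^{[1/\alpha]}b$, $D^{[1/\alpha]}\sigma$) to get a priori control on $\|y\|_{\infty,T}$ in terms of $y_0$, $f_{b,\sigma}$ and $\|\mathbb W\|_{\alpha\textrm{-H\"ol},T}$, and then apply the local-error and flow-Lipschitz estimates on the resulting compact. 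This is how Friz--Victoir handle it (their Exercises 10.55--10.56, also cited in the paper), and it is why the statement allows $C$ to depend on $f_{b,\sigma}$ in its entirety rather than only on a global $\textrm{Lip}^{\gamma}$ norm.
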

\noindent
See Friz and Victoir \cite{FV10}, Theorem 10.30.
\\
\\
Finally, let us state a definition of invariant sets for Equation (\ref{main_equation}).
\\
\\
Let $S$ be a subset of $\mathbb R^d$.
%


%
\begin{definition}\label{viability}
A function $\varphi : [0,T]\rightarrow\mathbb R^d$ is viable in $S$ if and only if,
\begin{displaymath}
\varphi(t)\in S
\textrm{ $;$ }
\forall t\in [0,T].
\end{displaymath}
\end{definition}
\noindent
The following definition provides a natural extension of the notion of invariant set in the rough paths theory setting.
%


%
\begin{definition}\label{invariant_sets}
\white .\black
\begin{enumerate}
 \item The subset $S$ is invariant for $(\sigma,\mathbf w)$ if and only if, for any initial condition $y_0\in S$, every solution on $[0,T]$ of the rough differential equation $dy =\sigma(y)d\mathbf w$ is viable in $S$.
 \item The subset $S$ is invariant for $(b,\sigma,\mathbb W)$ if and only if, for any initial condition $y_0\in S$, every solution on $[0,T]$ of the rough differential equation $dy = f_{b,\sigma}(y)d\mathbb W$ is viable in $S$.
\end{enumerate}
\end{definition}
%


%
\section{An invariance theorem for rough differential equations}
Consider a nonempty closed set $K\subset\mathbb R^d$. For every map $\varphi :\mathbb R^d\rightarrow\mathcal M_{d,m}(\mathbb R)$ with $m\in\mathbb N^*$, consider
\begin{align}\label{def-k-phi}
K_{\varphi} :=
\bigcap_{k = 1}^{m}
\{x\in\mathbb R^d :\forall x^*\in\Pi_K(x)\textrm{$,$ }
\varphi_{.,k}(x)
\in T_K(x^*)^{\circ\circ}\},
\end{align}
and then
\begin{align}\label{def-k-b-sigma}
K_{b,\pm\sigma} :=
K_b\cap K_{\sigma}\cap K_{-\sigma}.
\end{align}
The invariance of $K$ for $(b,\sigma,\mathbb W)$ is studied in this section under the two following assumptions on the maps $b$ and $\sigma$, and the signal $w$.
%


%
\begin{assumption}\label{viability_assumption}
$K\subset K_{b,\pm\sigma}$.
\end{assumption}
%


%
\begin{assumption}\label{signal_behavior}
There exists $\lambda,\mu\in ]0,\infty[$, $\beta\in (0,2\alpha\wedge 1)$, $t_0\in (0,T]$, $l\in\mathcal S_{t_0}$ and a countable set $\mathcal B_e\subset\partial B_e(0,1)$ such that $\{\pm e_k ; k\in\llbracket 1,e\rrbracket\}\subset\mathcal B_e$, $\overline{\mathcal B_e} =\partial B_e(0,1)$ and
\begin{displaymath}
-\mu =
\inf_{\delta\in\mathcal B_e}
\liminf_{t\rightarrow 0^+}
\frac{\langle \delta,w_t\rangle}{t^{\beta}l(t)}
\leqslant\sup_{\delta\in\mathcal B_e}
\liminf_{t\rightarrow 0^+}
\frac{\langle \delta,w_t\rangle}{t^{\beta}l(t)} = -\lambda.
\end{displaymath}
\end{assumption}
\noindent
Consider also the following stronger assumption on the maps $b$ and $\sigma$ :
%


%
\begin{assumption}\label{viability_assumption_weak}
$K_{b,\pm\sigma} =\mathbb R^d$.
\end{assumption}
\noindent
Now, let us state the main result of the paper ; the invariance theorem.
%


%
\begin{theorem}\label{viability_theorem}
Under the Assumptions \ref{geometric_rough_path_w} and \ref{non_explosion_b_sigma} on $b$ and $\sigma$ :
\begin{enumerate}
 \item Under Assumption \ref{viability_assumption_weak}, $K$ is invariant for $(b,\sigma,\mathbb W)$.
 \item When $K$ is convex :
 \begin{enumerate}
  \item Under Assumption \ref{viability_assumption}, $K$ is invariant for $(b,\sigma,\mathbb W)$.
  \item Under the Assumptions \ref{gamma_Lipschitz_b_sigma} and \ref{signal_behavior}, if $K$ is invariant for $(b,\sigma,\mathbb W)$, then Assumption \ref{viability_assumption} is fulfilled.
 \end{enumerate}
 \item When $K$ is compact and $b\equiv 0$, under Assumption \ref{signal_behavior}, if $K$ is invariant for $(\sigma,\mathbf w)$, then Assumption \ref{viability_assumption} is fulfilled.
\end{enumerate}
\end{theorem}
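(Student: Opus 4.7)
The plan is to split Theorem~\ref{viability_theorem} into a sufficiency half (items~(1) and~(2a), deducing invariance from the tangent cone condition) and a necessity half (items~(2b) and~(3), deriving the tangent cone condition from invariance), with the signal oscillation of Assumption~\ref{signal_behavior} used only in the latter.

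For the sufficiency I would approximate the rough driver by smooth paths. By Assumption~\ref{geometric_rough_path_w} and the very definition of $G\Omega_{\alpha,T}$, pick $W^n\in C^{1\text{-H\"ol}}([0,T],\mathbb R^{e+1})$ with $S_{[1/\alpha]}(W^n)\to\mathbb W$ in the rough path metric, so that by~(\ref{solution_approximation}) any solution $y$ of the RDE is a uniform limit of the ODE solutions $y^n$ of $\dot y^n=f_{b,\sigma}(y^n)\dot W^n=b(y^n)+\sum_{j=1}^{e}\sigma_{\cdot,j}(y^n)\dot w^{n,j}$. The viability hypothesis places $b$ and each $\pm\sigma_{\cdot,j}$ in the closed convex cone $T_K(x^*)^{\circ\circ}$ for every $x^*\in\Pi_K(x)$, so the full velocity $\dot y^n$ belongs to that cone regardless of the signs of the $\dot w^{n,j}$. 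A Nagumo-type viability theorem for locally Lipschitz vector fields then gives $y^n(t)\in K$ on $[0,T]$: in~(2a) convexity of $K$ yields $T_K^{\circ\circ}=T_K$ so Nagumo applies directly, whereas in~(1) the stronger condition $K_{b,\pm\sigma}=\mathbb R^d$ allows one to differentiate $d_K^2\circ y^n$ along the flow and conclude that it stays at zero also from initial data outside $K$. Passing to the limit gives the conclusion.

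For the necessity I would argue by contradiction. Suppose $x_0\in K$ violates Assumption~\ref{viability_assumption}, say $\sigma_{\cdot,k}(x_0)\notin T_K(x_0)^{\circ\circ}$ (the cases of $-\sigma_{\cdot,k}$ and of $b$ are symmetric). By the bipolar theorem pick $p\in T_K(x_0)^{\circ}$ with $\langle p,\sigma_{\cdot,k}(x_0)\rangle>0$, so that $v:=\sigma(x_0)^{T}p\in\mathbb R^{e}\setminus\{0\}$. The inclusion $\{\pm e_j\}\subset\mathcal B_e$ together with the two-sided control in Assumption~\ref{signal_behavior} first yields a uniform estimate $\|w_t\|\leq C\,t^{\beta}l(t)$ for small $t$. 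Next, density of $\mathcal B_e$ in $\partial B_e(0,1)$ allows me to pick $\delta\in\mathcal B_e$ close enough to $-v/\|v\|$ to extract a sequence $t_n\downarrow 0$ with $\langle v,w_{t_n}\rangle\geq c_0\|v\|\,t_n^{\beta}l(t_n)$. The step-$[1/\alpha]$ Euler estimate (Theorem~\ref{Euler_scheme_convergence}, applicable under Assumption~\ref{gamma_Lipschitz_b_sigma} in item~(2b), and degenerating to a direct first-order estimate when $b\equiv 0$ in item~(3)) provides the short-time expansion
\begin{displaymath}
y_{t}-x_0=\sigma(x_0)w_t+b(x_0)t+R_t,\qquad \|R_t\|=O\bigl(t^{2\alpha}\bigr),
\end{displaymath}
for any solution starting at $x_0$. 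Pairing with $p$ gives $\langle p,y_{t_n}-x_0\rangle\geq (c_0\|v\|/2)\,t_n^{\beta}l(t_n)$ for $n$ large, since $\beta\in(0,2\alpha\wedge 1)$ and $l\in\mathcal S_{t_0}$ make $t^{\beta}l(t)$ dominate both $t$ and $t^{2\alpha}$ as $t\to 0^+$. In~(2b) this contradicts invariance immediately, because $p\in N_K(x_0)$ forces $\langle p,y-x_0\rangle\leq 0$ on the convex $K$; in~(3) with $K$ only compact, I further combine with the matching upper bound $\|y_{t_n}-x_0\|=O(t_n^{\beta}l(t_n))$ so that every accumulation direction of $(y_{t_n}-x_0)/\|y_{t_n}-x_0\|$, which must lie in $T_K(x_0)$, would have strictly positive pairing with $p$, contradicting $p\in T_K(x_0)^{\circ}$.

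The main obstacle I foresee is the necessity step, where three technical points have to be made precise in concert: accurate control of the rough Taylor remainder $R_{t_n}$ along the signal-driven subsequence $t_n$ (as opposed to a fixed Euler dissection); the quantitative replacement of the nominal geometric direction $-v/\|v\|$ by an admissible $\delta\in\mathcal B_e$, which is delicate because the $\liminf$ in Assumption~\ref{signal_behavior} is not stable under perturbation of $\delta$; and, in item~(3) where Assumption~\ref{gamma_Lipschitz_b_sigma} is not available, the upgrade from mere existence of solutions to a uniform short-time expansion, for which the compactness of $K$ should provide bounds independent of the particular solution chosen.
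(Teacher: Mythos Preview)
Your sufficiency argument for (1) and (2a) is essentially the paper's: approximate $\mathbb W$ by smooth signals, prove viability of the ODE solutions (the paper does this by differentiating $t\mapsto d_K^2(y_t^n)$, which is the standard proof of the Nagumo theorem you invoke; for (2a) it composes with the convex projection $p_K$ to reduce to the stronger hypothesis of (1)), and pass to the limit using closedness of $K$. Likewise, your necessity argument for item~(3) and for the $\sigma$--part of item~(2b) matches the paper's proof of~(3): pair the short--time expansion with $p\in T_K(x_0)^\circ$, use Proposition~\ref{extension_signal_behavior} to transfer Assumption~\ref{signal_behavior} from $\mathcal B_e$ to $v/\|v\|$, and force $\sigma(x_0)^{\tau}p=0$.

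There is, however, a genuine gap in your treatment of the \emph{drift} condition in~(2b). The $b$--case is \emph{not} symmetric to the $\sigma$--case: once you know $\sigma(x_0)^{\tau}p=0$, your expansion gives
\[
\langle p,\,y_t-x_0\rangle=\langle p,\,b(x_0)\rangle\,t+R_t,\qquad \|R_t\|=O(t^{2\alpha}),
\]
and for $\alpha\leq 1/2$ the remainder $O(t^{2\alpha})$ dominates the drift term $O(t)$, so no contradiction follows. The signal oscillation of Assumption~\ref{signal_behavior} is of no help here because the drift is paired with~$dt$, not~$dw_t$. To get the drift condition one must show that the higher--order Euler increments (the iterated $\sigma$--terms of level $2,\dots,[1/\alpha]$) also have zero pairing with $p$, and this cannot be read off from $\sigma(x_0)^{\tau}p=0$ at the single point $x_0$.

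The paper's route around this is a reduction to half--spaces. By an affine change of variables it suffices to treat $D_\nu=\{x^{(\nu)}\geq 0\}$ (Lemma~\ref{necessary_viability_lemma}); there the $\sigma$--step gives $\sigma_{\nu,\cdot}\equiv 0$ on the entire hyperplane $\partial D_\nu$, not just at $y_0$. Differentiating this identity along $\partial D_\nu$ (Schwarz's lemma) kills every iterated $\sigma$--term in the $\nu$--component of the Euler scheme, so that $\widehat y_t^{(\nu)}=b^{(\nu)}(y_0)\,t$ exactly, and the drift contradiction goes through for all $\alpha$. For a general convex $K$ one then passes through the supporting half--space $H_{y_0,s}$ at the offending boundary point. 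Your proposal is missing this flat--boundary mechanism (or an equivalent device) to absorb the $O(t^{2\alpha})$ terms in the low--regularity regime $\alpha\leq 1/2$.
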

%


%
\begin{remark}\label{viability_assumption_remarks}
\white .\black
\begin{enumerate}
 \item By Remark \ref{tangent_cone_interior}, for any map $\varphi :\mathbb R^d\rightarrow\mathcal M_{d,m}(\mathbb R)$ with $m\in\mathbb N^*$, $\normalfont{\textrm{int}}(K)\subset K_{\varphi}$. So, in particular, $\normalfont{\textrm{int}}(K)\subset K_{b,\pm\sigma}$. Therefore, Assumption \ref{viability_assumption} is satisfied if and only if $\partial K\subset K_{b,\pm\sigma}$.
 \item If $K$ is convex, then
 \begin{displaymath}
 K_{\varphi} =
 \bigcap_{k = 1}^{m}
 \{x\in\mathbb R^d :\varphi_{.,k}(x)\in T_K(p_K(x))\}
 \end{displaymath}
 for every $\varphi :\mathbb R^d\rightarrow\mathcal M_{d,m}(\mathbb R)$ with $m\in\mathbb N^*$, and where $p_K(x)$ is the unique projection of $x\in\mathbb R^d$ on $K$.
 \item As in Aubin and Da Prato \cite{AD90}, when $K$ is not convex, the sufficient condition involves all $x\in\mathbb R^d$, and not only all $x\in K$ (see the statement of Theorem 1.5 and its remark page 601).
 \item Assumption \ref{viability_assumption} for some usual convex subsets of $\mathbb R^d$ :
 \begin{itemize}
  \item When $K$ is a vector subspace of $\mathbb R^d$, Assumption \ref{viability_assumption} means that
  \begin{displaymath}
  b(K)\subset K
  \end{displaymath}
  and
  \begin{displaymath}
  \sigma_{.,k}(K)\subset K
  \textrm{ $;$ }
  \forall k\in\llbracket 1,e\rrbracket.
  \end{displaymath}
  \item When $K$ is the unit ball of $\mathbb R^d$, Assumption \ref{viability_assumption} means that for every $x\in\mathbb R^d$ such that $\|x\| = 1$,
  \begin{displaymath}
  \langle x,b(x)\rangle\leqslant 0
  \end{displaymath}
  and
  \begin{displaymath}
  \langle\sigma_{.,k}(x),x\rangle = 0
  \textrm{ $;$ }
  \forall k\in\llbracket 1,e\rrbracket.
  \end{displaymath}
  \item Consider the polyhedron
  \begin{displaymath}
  K =\bigcap_{i\in I}\{x\in\mathbb R^d :\langle s_i,x - a_i\rangle\leqslant 0\}
  \end{displaymath}
  where, $I\subset\mathbb N$ is a nonempty finite set, and $(a_i)_{i\in I}$ and $(s_i)_{i\in I}$ are two families of elements of $\mathbb R^d$ such that $s_i\not= 0$ for every $i\in I$. Here, Assumption \ref{viability_assumption} means that for every $x\in K$ and $i\in I$ such that $\langle s_i,x - a_i\rangle = 0$,
  \begin{displaymath}
  \langle s_i,b(x)\rangle\leqslant 0
  \end{displaymath}
  and
  \begin{displaymath}
  \langle s_i,\sigma_{.,k}(x)\rangle = 0
  \textrm{ $;$ }
  \forall k\in\llbracket 1,e\rrbracket.
  \end{displaymath}
  These conditions on $b$ and $\sigma$ are quite natural, and the same as in Milian \cite{MILIAN95} or Cresson et al. \cite{CPS13}, where the driving signal of the main equation is a Brownian motion.
 \end{itemize}
 \item Assumption \ref{signal_behavior} is close to the notion of "signal rough at time 0" stated at \cite{FH14}, Chapter 6.
 \item Almost all the paths of the $e$-dimensional fractional Brownian motion satisfy Assumption \ref{signal_behavior} (see Proposition \ref{fbm_behavior}).
\end{enumerate}
\end{remark}
\noindent
At Subsection 3.1, the invariance of $K$ for $(b,\sigma,\mathbb W)$ is proved under Assumption \ref{viability_assumption_weak}, and under Assumption \ref{viability_assumption} when $K$ is convex. At Subsection 3.2, when $K$ is compact and $b\equiv 0$, under Assumption \ref{signal_behavior}, the necessity of Assumption \ref{viability_assumption} to get the invariance of $K$ for $(\sigma,\mathbf w)$ is proved. At Subsection 3.3, when $K$ is convex, under the Assumptions \ref{gamma_Lipschitz_b_sigma} and \ref{signal_behavior}, the necessity of Assumption \ref{viability_assumption} to get the invariance of $K$ for $(b,\sigma,\mathbb W)$ is proved.
%


%
\subsection{Sufficient condition of invariance}
The main purpose of this subsection is to prove the invariance of $K$ for $(b,\sigma,\mathbb W)$ under Assumption \ref{viability_assumption_weak}, and under Assumption \ref{viability_assumption} when $K$ is convex (Theorem \ref{viability_theorem}.(1,2.a)). As in Aubin and Da Prato \cite{AD90}, the proof deeply relies on the fact that $t\in [0,T]\mapsto d_{K}^{2}(y_t)$ has a nonpositive epiderivative (see J.P. Aubin et al. \cite{ABSP11}, Section 18.6.2), where $y$ is the solution of $dy = f_{b,\sigma}(y)dW$ with $\alpha = 1$ and $y_0\in K$ as initial condition (see Lemma \ref{sufficient_viability_lemma}). Finally, when $K$ is convex and compact, Corollary \ref{sufficient_viability_corollary} allows to relax the regularity assumptions on $b$ and $\sigma$.
%


%
\begin{lemma}\label{sufficient_viability_lemma}
Let $K$ be nonempty closed subset of $\mathbb R^d$. Under the Assumptions \ref{non_explosion_b_sigma} and \ref{viability_assumption_weak} with $\alpha = 1$, the solution $y$ on $[0,T]$ of the ordinary differential equation $dy = f_{b,\sigma}(y)dW$ with $y_0\in K$ as initial condition is viable in $K$.
\end{lemma}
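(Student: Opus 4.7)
The plan is to follow the classical Aubin--Da Prato strategy: show that $g(t) := d_K^2(y_t)$ is nonincreasing on $[0,T]$. Since $y_0 \in K$ implies $g(0)=0$ and $g \geq 0$, this forces $g \equiv 0$, i.e.\ the viability of $y$ in $K$.

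First, I would set the regularity framework. Because $\alpha = 1$, the signal $W$ is Lipschitz continuous, and under Assumption \ref{non_explosion_b_sigma} the vector fields $b$ and $\sigma$ are globally Lipschitz on $\mathbb R^d$. Hence the driving ODE $\dot y_t = b(y_t) + \sigma(y_t)\dot w_t$ has a (unique) absolutely continuous solution on $[0,T]$, which is in fact Lipschitz. The function $d_K^2$ is locally Lipschitz on $\mathbb R^d$, so $g = d_K^2 \circ y$ is absolutely continuous; it suffices to prove $g'(t) \leq 0$ at every Lebesgue point $t$ of $\dot y$.

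The key pointwise bound comes from the projection inequality. For $x^* \in \Pi_K(y_t)$ and $h>0$,
\begin{equation*}
d_K^2(y_{t+h}) \leq \|y_{t+h}-x^*\|^2 = d_K^2(y_t) + 2\langle y_t-x^*,\, y_{t+h}-y_t\rangle + \|y_{t+h}-y_t\|^2,
\end{equation*}
so dividing by $h$, letting $h \to 0^+$, and using Lipschitz continuity of $y$ gives $g'(t) \leq 2\langle y_t - x^*, \dot y_t\rangle$. Now I invoke the fundamental property of the proximal normal direction: whenever $x^* \in \Pi_K(y_t)$, the vector $y_t - x^*$ lies in the polar cone $T_K(x^*)^{\circ}$ (this is standard non-smooth analysis and, presumably, Appendix A of the paper). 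Consequently, for every $v \in T_K(x^*)^{\circ\circ}$ we get $\langle y_t-x^*,v\rangle \leq 0$.

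Applying this with $v = b(y_t)$, $v = \sigma_{.,k}(y_t)$ and $v = -\sigma_{.,k}(y_t)$, which all belong to $T_K(x^*)^{\circ\circ}$ by Assumption \ref{viability_assumption_weak} (i.e.\ $K_{b,\pm\sigma} = \mathbb R^d$), yields $\langle y_t-x^*, b(y_t)\rangle \leq 0$ and $\langle y_t-x^*, \sigma_{.,k}(y_t)\rangle = 0$ for each $k \in \llbracket 1,e\rrbracket$. Since $\dot y_t = b(y_t) + \sum_{k=1}^e \dot w_t^{(k)}\sigma_{.,k}(y_t)$, this gives $\langle y_t-x^*,\dot y_t\rangle \leq 0$ and hence $g'(t) \leq 0$ a.e., which concludes the argument.

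The expected obstacle is purely technical rather than conceptual: one must handle the non-smoothness of $d_K^2$ (it is only differentiable where $\Pi_K$ is a singleton) via the Dini/epiderivative estimate above, and one must carefully justify the inclusion $y_t - x^* \in T_K(x^*)^{\circ}$ when $x^* \in \Pi_K(y_t)$ without assuming $K$ convex. All remaining steps are direct linear-algebra manipulations using the symmetry $\pm\sigma$.
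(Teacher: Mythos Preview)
Your proposal is correct and follows essentially the same Aubin--Da Prato strategy as the paper: both use the projection inequality $d_K^2(y_{t+h}) \leq \|y_{t+h}-x^*\|^2$, the inclusion $y_t-x^*\in T_K(x^*)^{\circ}$ (Proposition \ref{orthogonal_projections_closed}), and Assumption \ref{viability_assumption_weak} on $b$ and $\pm\sigma$ to kill the first-order term. The only cosmetic difference is that the paper packages Step 2 via the Dini/epiderivative bound $d_K^2(y_{t+h})-d_K^2(y_t)\leq Ch^2$ and then invokes Aubin's comparison lemma, whereas you use absolute continuity of $g=d_K^2\circ y$ and conclude directly from $g'\leq 0$ a.e.; both are valid here since $y$ is Lipschitz.
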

%


%
\begin{proof}
In order to show that $y$ is viable in $K$ in a second step, as in Aubin and Da Prato \cite{AD90}, the following inequality is proved in a first step :
\begin{equation}\label{epiderivative_bound}
\liminf_{h\rightarrow 0^+}
\frac{d_{K}^{2}(y_{t + h}) - d_{K}^{2}(y_t)}{h}
\leqslant 0.
\end{equation}
\textbf{Step 1.} For $t\in [0,T]$ and $h > 0$,
\begin{equation}\label{sufficient_viability_lemma_1}
y_{t + h} - y_t =
b(y_t)h +\sigma(y_t)(w_{t + h} - w_t) + R_{t,h}
\end{equation}
with
\begin{displaymath}
R_{t,h} :=
\int_{t}^{t + h}[b(y_s) - b(y_t)]ds +
\int_{t}^{t + h}[\sigma(y_s) -\sigma(y_t)]dw_s.
\end{displaymath}
Since $y$ (resp. $b$) is Lipschitz continuous from $[0,T]$ (resp. $\mathbb R^d$) into $\mathbb R^d$, there exists $C_1 > 0$ such that
\begin{displaymath}
\left\|\int_{t}^{t + h}[b(y_s) - b(y_t)]ds\right\|
\leqslant C_1h^2.
\end{displaymath}
The function $w$ is Lipschitz continuous from $[0,T]$ into $\mathbb R^e$, so there exists $\dot w\in L^{\infty}([0,T],\mathbb R^e)$ such that :
\begin{displaymath}
w_s = w_0 +
\int_{0}^{s}\dot w_udu
\textrm{ $;$ }
\forall s\in [0,T].
\end{displaymath}
Then,
\begin{eqnarray*}
 \left\|\int_{t}^{t + h}[\sigma(y_s) -\sigma(y_t)]dw_s\right\| & = &
 \left\|\int_{t}^{t + h}[\sigma(y_s) -\sigma(y_t)]\dot w_sds\right\|\\
 & \leqslant &
 \|\dot w\|_{\infty,T}\int_{t}^{t + h}\|\sigma(y_s) -\sigma(y_t)\|_{\mathcal M_{d,e}(\mathbb R)}ds.
\end{eqnarray*}
Since $y$ (resp. $\sigma$) is Lipschitz continuous from $[0,T]$ (resp. $\mathbb R^d$) into $\mathbb R^d$ (resp. $\mathcal M_{d,e}(\mathbb R)$), there exists $C_2 > 0$ such that :
\begin{displaymath}
\int_{t}^{t + h}\|\sigma(y_s) -\sigma(y_t)\|_{\mathcal M_{d,e}(\mathbb R)}ds
\leqslant C_2h^2.
\end{displaymath}
Therefore,
\begin{equation}\label{sufficient_viability_lemma_2}
\|R_{t,h}\|\leqslant C_3h^2
\end{equation}
with $C_3 := C_1 + C_2\|\dot w\|_{\infty,T}$.
\\
\\
For $y_{t}^{*}\in\Pi_K(y_t)$ and $y_{t + h}^{*}\in\Pi_K(y_{t + h})$ arbitrarily chosen :
\begin{eqnarray*}
 d_{K}^{2}(y_{t + h}) & = &
 \|y_{t + h} - y_{t + h}^{*}\|^2\\
 & \leqslant &
 \|y_{t + h} - y_{t}^{*}\|^2\\
 & = &
 \|y_{t + h} - y_t\|^2 +
 2\langle y_{t + h} - y_t,y_t - y_{t}^{*}\rangle +
 \|y_t - y_{t}^{*}\|^2.
\end{eqnarray*}
So,
\begin{equation}\label{sufficient_viability_lemma_3}
d_{K}^{2}(y_{t + h}) -
d_{K}^{2}(y_t)
\leqslant
\|y_{t + h} - y_t\|^2 +
2\langle y_{t + h} - y_t,y_t - y_{t}^{*}\rangle.
\end{equation}
The function $y$ is Lipschitz continuous from $[0,T]$ into $\mathbb R^d$, so there exists $C_4 > 0$ such that :
\begin{displaymath}
\|y_{t + h} - y_t\|^2\leqslant C_4h^2.
\end{displaymath}
By Equality (\ref{sufficient_viability_lemma_1}) and Inequality (\ref{sufficient_viability_lemma_2}) :
\begin{eqnarray*}
 \langle y_{t + h} - y_t,y_t - y_{t}^{*}\rangle\leqslant
 C_3h^2 & + & \\
 \langle b(y_t),y_t - y_{t}^{*}\rangle h & + &
 \sum_{k = 1}^{e}\langle\sigma_{.,k}(y_t),y_t - y_{t}^{*}\rangle (w_{t + h}^{(k)} - w_{t}^{(k)}).
\end{eqnarray*}
Moreover,
\begin{displaymath}
\langle b(y_t),y_t - y_{t}^{*}\rangle h +
\sum_{k = 1}^{e}\langle\sigma_{.,k}(y_t),y_t - y_{t}^{*}\rangle (w_{t + h}^{(k)} - w_{t}^{(k)})
\leqslant 0
\end{displaymath}
because $y_t - y_{t}^{*}\in T_K(y_{t}^{*})^{\circ}$ by Proposition \ref{orthogonal_projections_closed}, and $b(y_t),\pm\sigma_{.,k}(y_t)\in T_K(y_{t}^{*})^{\circ\circ}$ for every $k\in\llbracket 1,e\rrbracket$ by Assumption \ref{viability_assumption_weak}. So,
\begin{displaymath}
\langle y_{t + h} - y_t,y_t - y_{t}^{*}\rangle
\leqslant C_3h^2.
\end{displaymath}
Therefore, by Inequality (\ref{sufficient_viability_lemma_3}) :
\begin{displaymath}
d_{K}^{2}(y_{t + h}) -
d_{K}^{2}(y_t)
\leqslant
C_5h^2
\end{displaymath}
with $C_5 := 2C_3 + C_4$. This achieves the first step.
\\
\\
\textbf{Step 2.} Consider the function $\varphi : [0,T]\rightarrow\mathbb R_+$ defined by :
\begin{displaymath}
\varphi(t) :=
d_{K}^{2}(y_t)
\textrm{ $;$ }
\forall t\in [0,T].
\end{displaymath}
Assume that there exists $\tau\in (0,T]$ such that $\varphi(\tau) > 0$. Since $\varphi$ is continuous on $[0,T]$, the set
\begin{displaymath}
\{t\in [0,\tau) :\forall s\in (t,\tau]\textrm{$,$ }\varphi(s) > 0\}
\end{displaymath}
is not empty, and its infimum is denoted by $t_*$. Moreover, if $\varphi(t_*) > 0$, then there exists $t_{**}\in [0,t_*)$ such that $\varphi(t) > 0$ for every $t\in (t_{**},t_{*}]$. So, necessarily, $\varphi(t_*) = 0$.
\\
\\
By Inequality (\ref{epiderivative_bound}), for every $t\in [t_*,\tau]$,
\begin{displaymath}
D_{\uparrow}\varphi(t)(1) :=
\liminf_{h\rightarrow 0^+,u\rightarrow 1}
\frac{\varphi(t + hu) -\varphi(t)}{h}\leqslant 0.
\end{displaymath}
So, by Aubin \cite{AUBIN90} :
\begin{displaymath}
0 <\varphi(\tau) =
\varphi(\tau) -\varphi(t_*)\leqslant 0.
\end{displaymath}
There is a contradiction, then $\varphi$ is nonpositive on $[0,T]$. Since $\varphi([0,T])\subset\mathbb R_+$, necessarily :
\begin{displaymath}
\varphi(t) = d_{K}^{2}(y_t) = 0
\textrm{ $;$ }
\forall t\in [0,T].
\end{displaymath}
In other words, $y$ is viable in $K$.
\end{proof}
\noindent
Via Lemma \ref{sufficient_viability_lemma}, let us prove Theorem \ref{viability_theorem}.(1,2).
%


%
\begin{proof}
\textit{Theorem \ref{viability_theorem}.(1,2a)}. Theorem \ref{viability_theorem}.(1) is proved at the first step, and Theorem \ref{viability_theorem}.(2a) is proved at the second step.
\\
\\
\textbf{Step 1.} Assume that $\alpha\in (0,1]$ and $K_{b,\pm\sigma} =\mathbb R^d$. Since $K$ is a closed subset of $\mathbb R^d$, every solution on $[0,T]$ of the rough differential equation $dy = f_{b,\sigma}(y)d\mathbb W$ with $y_0\in K$ as initial condition is viable in $K$ by Lemma \ref{sufficient_viability_lemma} together with Equality (\ref{solution_approximation}).
\\
\\
\textbf{Step 2.} Assume that $\alpha = 1$, $K$ is convex and $K\subset K_{b,\pm\sigma}$ (see (\ref{def-k-b-sigma}) for a definition). Let $y$ be the solution on $[0,T]$ of the ordinary differential equation $dy = f_{b,\sigma}(y)dW$ with $y_0\in K$ as initial condition. Consider the maps $B := b\circ p_K$, $S :=\sigma\circ p_K$ and $F :\mathbb R^d\rightarrow\mathcal M_{d,e + 1}(\mathbb R)$ such that :
\begin{displaymath}
F(x) :=
\sum_{k = 1}^{d}
B^{(k)}(x)e_{k,1} +
\sum_{l = 2}^{e + 1}
\sum_{k = 1}^{d}
S_{k,l}(x)e_{k,l}
\textrm{ $;$ }
\forall x\in\mathbb R^d.
\end{displaymath}
Since $f_{b,\sigma}$ (resp. $p_K$) is Lipschitz continuous from $\mathbb R^d$ into $\mathcal M_{d,e + 1}(\mathbb R)$ (resp. $K$), $F$ is Lipschitz continuous from $\mathbb R^d$ into $\mathcal M_{d,e + 1}(\mathbb R)$. So, the ordinary differential equation $dY = F(Y)dW$ with $y_0\in K$ as initial condition has a unique solution $Y$ on $[0,T]$.
\\
\\
Since $K_{B,\pm S} =\mathbb R^d$, $Y$ is viable in $K$ by Lemma \ref{sufficient_viability_lemma}.
\\
\\
Therefore, the solution $y$ of the ordinary differential equation $dy = f_{b,\sigma}(y)dW$ with $y_0$ as initial condition coincides with $Y$ on $[0,T]$ because $f_{b,\sigma}$ coincides with $F$ on $K$. In particular, $y$ is viable in $K$.
\\
\\
Assume now that $\alpha\in (0,1]$. Since $K$ is a closed subset of $\mathbb R^d$, every solution on $[0,T]$ of the rough differential equation $dy = f_{b,\sigma}(y)d\mathbb W$ with $y_0\in K$ as initial condition is viable in $K$ by Equality (\ref{solution_approximation}).
\end{proof}
%


%
\begin{corollary}\label{sufficient_viability_corollary}
Under the Assumptions \ref{geometric_rough_path_w} and \ref{viability_assumption}, if $K$ is a nonempty convex and compact subset of $\mathbb R^d$, $b\in\normalfont{\textrm{Lip}}_{\normalfont{\textrm{loc}}}^{\gamma - 1}(\mathbb R^d)$ and $\sigma\in\normalfont{\textrm{Lip}}_{\normalfont{\textrm{loc}}}^{\gamma - 1}(\mathbb R^d,\mathcal M_{d,e}(\mathbb R))$ with $\gamma > 1/\alpha$, then all the solutions of the rough differential equation $dy = f_{b,\sigma}(y)d\mathbb W$ with $y_0\in K$ as initial condition are defined on $[0,T]$ and viable in $K$.
\end{corollary}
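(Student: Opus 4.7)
The plan is to reduce the statement to Theorem~\ref{viability_theorem}.(2a) by a smooth truncation of $b$ and $\sigma$ outside a compact neighborhood of $K$. Since $K$ is compact, I would fix $R>0$ with $K\subset B_d(0,R)$ and a cut-off $\psi\in C_c^{\infty}(\mathbb R^d,[0,1])$ that is equal to $1$ on $B_d(0,R)$ and supported in $B_d(0,R+1)$, and set $\tilde b:=\psi b$ and $\tilde\sigma:=\psi\sigma$. Leibniz's rule, together with $b,\sigma\in\normalfont{\textrm{Lip}}_{\normalfont{\textrm{loc}}}^{\gamma-1}$ and the smoothness and compact support of $\psi$, then yields that $\tilde b$ and $\tilde\sigma$ are globally bounded, globally Lipschitz and globally $\normalfont{\textrm{Lip}}^{\gamma-1}$ in the sense of Stein, so that they fulfil the content of Assumption~\ref{non_explosion_b_sigma} used in the proof of Theorem~\ref{viability_theorem}.(2a) and the global existence theorem of Friz and Victoir (Theorem~10.26) applies to the truncated rough differential equation $d\tilde y=f_{\tilde b,\tilde\sigma}(\tilde y)d\mathbb W$ on the whole of $[0,T]$.

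Next I would check that Assumption~\ref{viability_assumption} transfers from $(b,\sigma)$ to $(\tilde b,\tilde\sigma)$. Because $\psi=1$ on the open set $B_d(0,R)\supset K$, one has $\tilde b=b$ and $\tilde\sigma=\sigma$ on $K$; since $K$ is convex and closed, $\Pi_K(x)=\{x\}$ for every $x\in K$, and the condition $\tilde b(x),\pm\tilde\sigma_{.,k}(x)\in T_K(x)^{\circ\circ}$ for all $k\in\llbracket 1,e\rrbracket$ reduces to the corresponding condition for $(b,\sigma)$, which is exactly $K\subset K_{b,\pm\sigma}$. Hence $K\subset K_{\tilde b,\pm\tilde\sigma}$, and Theorem~\ref{viability_theorem}.(2a) applied to the truncated equation gives that every one of its solutions starting from $y_0\in K$ is defined on $[0,T]$ and viable in $K$.

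To conclude, I would transfer this viability back to the original equation. Let $y$ be any solution of $dy=f_{b,\sigma}(y)d\mathbb W$ with $y_0\in K$ defined on a maximal interval $[0,\tau)$, and set $s^{*}:=\sup\{t\in[0,\tau):y([0,t])\subset B_d(0,R)\}$. On $[0,s^{*})$ the coefficients $\tilde b,\tilde\sigma$ agree with $b,\sigma$ along $y$, so $y|_{[0,s^{*})}$ is a local truncated solution; concatenating it with a truncated solution issued from $(s^{*},y(s^{*}))$ and iterating produces, thanks to the absence of blow-up for the bounded coefficients $\tilde b,\tilde\sigma$, a truncated-RDE solution on the whole of $[0,T]$ extending $y|_{[0,s^{*})}$. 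The viability conclusion of the previous step then forces $y([0,s^{*}))\subset K$; continuity gives $y(s^{*})\in K\subset B_d(0,R)$ whenever $s^{*}<\tau$, and openness of $B_d(0,R)$ contradicts the definition of $s^{*}$, so $s^{*}=\tau$. Compactness of $K$ and maximality of $\tau$ then force $\tau=T$.

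The main obstacle I anticipate is this last step: in the absence of Assumption~\ref{gamma_Lipschitz_b_sigma}, the original equation may have several solutions, so one cannot simply identify $y$ with a distinguished truncated one via uniqueness. The extension-by-concatenation argument outlined above bypasses uniqueness entirely, since it only uses the global existence of truncated solutions and the fact that Theorem~\ref{viability_theorem}.(2a) produces viability for \emph{every} such solution starting at $y_0$, not merely for a particular one.
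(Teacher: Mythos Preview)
Your argument is correct, but the route you take is not the one the paper uses. The paper's proof is much shorter and proceeds without any truncation: it invokes local existence to get a solution $y$ on a maximal interval $[0,\tau)$, then applies Theorem~\ref{viability_theorem}.(2a) \emph{directly} to $(b,\sigma,\mathbb W)$ on $[0,\tau)$ to conclude $y_t\in K$ there; since $K$ is bounded, $y$ is bounded on $[0,\tau)$ and Friz--Victoir Theorem~10.21 forces $\tau=T$, after which Theorem~\ref{viability_theorem}.(2a) again gives viability on all of $[0,T]$. In other words, the paper treats the viability theorem as a result that localises automatically to the existence interval, while you make this localisation explicit by modifying the coefficients. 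Your approach buys extra rigour at the level of hypotheses (you actually put $\tilde b,\tilde\sigma$ into the global regularity class required by Theorem~\ref{viability_theorem}.(2a)) and your concatenation argument handles the absence of uniqueness cleanly; the paper's approach is shorter because the proof of Theorem~\ref{viability_theorem}.(2a) itself already composes the coefficients with the projection $p_K$ onto the convex compact $K$, so the required boundedness is effectively built in. Two cosmetic points: in the paper's notation $B_d(0,R)$ is the \emph{closed} ball, so your ``openness of $B_d(0,R)$'' step should be phrased with $\mathrm{int}(B_d(0,R))$ (take $R$ with $K\subset\mathrm{int}(B_d(0,R))$); and your last sentence implicitly uses the same non-explosion criterion (Friz--Victoir Theorem~10.21) that the paper cites, so you may as well name it.
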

%


%
\begin{proof}
Since $f_{b,\sigma}$ is locally $(\gamma - 1)$-Lipschitz continuous from $\mathbb R^d$ into $\mathcal M_{d,e + 1}(\mathbb R)$, there exists $\tau\in (0,T]$ such that the rough differential equation $dy = f_{b,\sigma}(y)d\mathbb W$ with $y_0\in K$ as initial condition has at least one solution $y$ on $[0,\tau)$.
\\
\\
Since $b$ and $\sigma$ satisfy Assumption \ref{viability_assumption}, by Theorem \ref{viability_theorem}.(2) applied to $(b,\sigma,\mathbb W)$ on \mbox{$[0,\tau[$ :}
\begin{displaymath}
y_t\in K
\textrm{ $;$ }
\forall t\in [0,\tau[.
\end{displaymath}
So, $y$ is bounded on $[0,\tau[$ by at least one continuous function from $[0,T]$ into $\mathbb R^d$ because $K$ is a bounded subset of $\mathbb R^d$.
\\
\\
Therefore, by Friz and Victoir \cite{FV10}, Theorem 10.21, $y$ is defined on $[0,T]$, and by Theorem \ref{viability_theorem}.(2), it is viable in $K$.
\end{proof}
%


%
\subsection{Necessary condition of invariance : compact case}
When $K$ is compact and $b\equiv 0$, the purpose of this subsection is to prove that under Assumption \ref{signal_behavior}, if $K$ is invariant for $(\sigma,\mathbf w)$, then Assumption \ref{viability_assumption} is fulfilled (Theorem \ref{viability_theorem}.(3)).
%


%
\begin{lemma}\label{upper_bound_w}
Under Assumption \ref{signal_behavior}, for $T_0 := t_0\wedge T$,
\begin{displaymath}
M :=
\max_{k\in\llbracket 1,e\rrbracket}
\sup_{t\in [0,T_0]}
\left|\frac{w_{t}^{(k)}}{t^{\beta}l(t)}\right| <\infty.
\end{displaymath}
\end{lemma}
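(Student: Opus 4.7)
The plan is to split the interval $[0,T_0]$ into a neighborhood of $0$, where Assumption \ref{signal_behavior} directly controls the ratio, and the complementary compact set away from $0$, where continuity and positivity do the work.

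Fix $k\in\llbracket 1,e\rrbracket$. Since $\pm e_k\in\mathcal B_e$, Assumption \ref{signal_behavior} applied with $\delta=e_k$ yields
\begin{displaymath}
\liminf_{t\rightarrow 0^+}\frac{w_t^{(k)}}{t^\beta l(t)}
=\liminf_{t\rightarrow 0^+}\frac{\langle e_k,w_t\rangle}{t^\beta l(t)}
\geqslant -\mu,
\end{displaymath}
and applied with $\delta=-e_k$ it yields
\begin{displaymath}
\liminf_{t\rightarrow 0^+}\frac{-w_t^{(k)}}{t^\beta l(t)}\geqslant -\mu,
\quad\textrm{i.e.}\quad
\limsup_{t\rightarrow 0^+}\frac{w_t^{(k)}}{t^\beta l(t)}\leqslant\mu.
\end{displaymath}
Combining these two bounds gives $\limsup_{t\rightarrow 0^+}|w_t^{(k)}/(t^\beta l(t))|\leqslant\mu$, so there exists $\eta_k\in (0,T_0)$ such that $|w_t^{(k)}/(t^\beta l(t))|\leqslant\mu+1$ on $(0,\eta_k]$.

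On $[\eta_k,T_0]$, the function $t\mapsto t^\beta l(t)$ is continuous and strictly positive on $(0,t_0)$ (by the definition of $\mathcal S_{t_0}$), and the interval is compact and contained in $(0,t_0)$; hence this function attains a positive minimum $c_k>0$ there. The numerator $w^{(k)}$ is $\alpha$-H\"older continuous on $[0,T]$, hence bounded by some $N_k:=\|w^{(k)}\|_{\infty,T}<\infty$. Therefore $|w_t^{(k)}/(t^\beta l(t))|\leqslant N_k/c_k$ on $[\eta_k,T_0]$. Putting the two estimates together bounds $\sup_{t\in(0,T_0]}|w_t^{(k)}/(t^\beta l(t))|$, and taking the maximum over the finite index set $\llbracket 1,e\rrbracket$ gives $M<\infty$ (the value at $t=0$ being immaterial, or $0$ by the convention $w_0=0$ which follows from the finiteness of the liminf in Assumption \ref{signal_behavior}).

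There is no real obstacle: the only subtle point is noticing that the two directions $\pm e_k$ in the countable set $\mathcal B_e$ provide simultaneously a lower bound on $\liminf$ and an upper bound on $\limsup$ of $w_t^{(k)}/(t^\beta l(t))$, which is exactly what is needed to convert the one-sided hypothesis of Assumption \ref{signal_behavior} into a two-sided control near $0$.
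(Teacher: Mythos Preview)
Your proof is correct and follows essentially the same strategy as the paper: use $\pm e_k\in\mathcal B_e$ to obtain finite $\liminf$ and $\limsup$ of $w_t^{(k)}/(t^\beta l(t))$ at $0^+$, then invoke continuity on the remaining compact piece of $(0,T_0]$. Your write-up is in fact more explicit than the paper's, which compresses the two steps into the single sentence ``the function is continuous, so there exists $r>0$ such that $\inf_{t\in[0,T_0]}\ldots\geqslant -r\mu$''; you spell out the decomposition $(0,\eta_k]\cup[\eta_k,T_0]$ and the positive lower bound on $t^\beta l(t)$ away from $0$. One small inaccuracy: since $t_0\in(0,T]$ forces $T_0=t_0$, the interval $[\eta_k,T_0]$ is not strictly contained in $(0,t_0)$ but in $(0,t_0]$, so one tacitly needs $l$ to extend continuously to $t_0$; the paper glosses over the same point.
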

%


%
\begin{proof}
By Assumption \ref{signal_behavior} :
\begin{eqnarray*}
 \min_{k\in\llbracket 1,e\rrbracket}
 \liminf_{t\rightarrow 0^+}
 \frac{w_{t}^{(k)}}{t^{\beta}l(t)} & = &
 \min_{k\in\llbracket 1,e\rrbracket}
 \liminf_{t\rightarrow 0^+}
 \frac{\langle e_k,w_t\rangle}{t^{\beta}l(t)}\\
 & \geqslant &
 \inf_{\delta\in\mathcal B_e}
 \liminf_{t\rightarrow 0^+}
 \frac{\langle\delta,w_t\rangle}{t^{\beta}l(t)} =
 -\mu.
\end{eqnarray*}
Moreover, the function
\begin{displaymath}
t\in (0,T_0]
\longmapsto
\frac{w_{t}^{(k)}}{t^{\beta}l(t)}
\end{displaymath}
is continuous. So, there exists $r > 0$ such that :
\begin{displaymath}
\min_{k\in\llbracket 1,e\rrbracket}
\inf_{t\in [0,T_0]}
\frac{w_{t}^{(k)}}{t^{\beta}l(t)}
\geqslant -r\mu.
\end{displaymath}
Similarly, there exists $R > 0$ such that :
\begin{displaymath}
\max_{k\in\llbracket 1,e\rrbracket}
\sup_{t\in [0,T_0]}
\frac{w_{t}^{(k)}}{t^{\beta}l(t)}
\leqslant R\mu.
\end{displaymath}
This achieves the proof.
\end{proof}
%


%
\begin{proposition}\label{extension_signal_behavior}
Under Assumption \ref{signal_behavior} :
\begin{displaymath}
-\mu =
\inf_{\delta\in\partial B_e(0,1)}
\liminf_{t\rightarrow 0^+}
\frac{\langle\delta,w_t\rangle}{t^{\beta}l(t)}
\leqslant
\sup_{\delta\in\partial B_e(0,1)}
\liminf_{t\rightarrow 0^+}
\frac{\langle\delta,w_t\rangle}{t^{\beta}l(t)} = -\lambda.
\end{displaymath}
\end{proposition}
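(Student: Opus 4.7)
The plan is to show that the functional
\[
F(\delta) := \liminf_{t\to 0^+} \frac{\langle \delta, w_t\rangle}{t^{\beta}l(t)}
\]
is continuous on the compact sphere $\partial B_e(0,1)$, after which the conclusion follows from density of $\mathcal B_e$: for a continuous real-valued function on a topological space, the infimum and supremum over any dense subset agree with those over the full space.

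The ingredient that makes $F$ continuous is Lemma \ref{upper_bound_w}. From it, the Euclidean norm of the normalized signal is controlled on $(0,T_0]$ by
\[
\frac{\|w_t\|}{t^{\beta}l(t)} \leqslant \sqrt{e}\, M.
\]
First I would pick any $\delta,\delta'\in\partial B_e(0,1)$ and apply Cauchy--Schwarz to get, for every $t\in(0,T_0]$,
\[
\frac{\langle \delta, w_t\rangle}{t^{\beta}l(t)} \leqslant \frac{\langle \delta', w_t\rangle}{t^{\beta}l(t)} + \sqrt{e}\,M\,\|\delta-\delta'\|.
\]
Since the last term is independent of $t$, taking $\liminf_{t\to 0^+}$ on both sides yields $F(\delta)\leqslant F(\delta') + \sqrt{e}\,M\,\|\delta-\delta'\|$, and symmetry gives the Lipschitz estimate $|F(\delta)-F(\delta')|\leqslant \sqrt{e}\,M\,\|\delta-\delta'\|$.

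Next, using the density $\overline{\mathcal B_e}=\partial B_e(0,1)$, for any $\delta\in\partial B_e(0,1)$ I pick a sequence $(\delta_n)_{n\in\mathbb N}\subset\mathcal B_e$ with $\delta_n\to\delta$; by continuity of $F$,
\[
F(\delta) = \lim_{n\to\infty} F(\delta_n).
\]
Consequently, $\inf_{\partial B_e(0,1)} F = \inf_{\mathcal B_e} F = -\mu$ and $\sup_{\partial B_e(0,1)} F = \sup_{\mathcal B_e} F = -\lambda$, which is exactly the claim.

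I don't anticipate any serious obstacle here; the only subtle point is that the $\liminf$ of a sum is not the sum of $\liminf$s in general, so it matters that the perturbation term $\sqrt{e}\,M\,\|\delta-\delta'\|$ is a constant with respect to $t$, allowing it to pass through the $\liminf$ cleanly.
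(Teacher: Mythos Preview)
Your proposal is correct and follows essentially the same approach as the paper: both use Lemma \ref{upper_bound_w} to bound $\|w_t\|/(t^{\beta}l(t))$, then approximate an arbitrary $\delta\in\partial B_e(0,1)$ by $\delta_n\in\mathcal B_e$ and pass the constant perturbation term through the $\liminf$. The only cosmetic difference is that you package the estimate as Lipschitz continuity of $F$ and invoke density, whereas the paper writes out the one-sided inequality $F(\delta)\leqslant\|\delta-\delta_n\|M-\lambda$ directly and uses $\mathcal B_e\subset\partial B_e(0,1)$ for the reverse inequality.
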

%


%
\begin{proof}
Let $\delta\in\partial B_e(0,1)$ be arbitrarily chosen. Since $\overline{\mathcal B_e} =\partial B_e(0,1)$, there exists a sequence $(\delta_n)_{n\in\mathbb N}$ of elements of $\mathcal B_e$ such that :
\begin{displaymath}
\lim_{n\rightarrow\infty}
\|\delta -\delta_n\| = 0.
\end{displaymath}
By Lemma \ref{upper_bound_w} :
\begin{displaymath}
M :=
\max_{k\in\llbracket 1,e\rrbracket}
\sup_{t\in [0,T_0]}
\left|\frac{w_{t}^{(k)}}{t^{\beta}l(t)}\right| <\infty.
\end{displaymath}
For every $n\in\mathbb N$ and $t\in (0,T_0]$,
\begin{eqnarray*}
 \frac{\langle\delta,w_t\rangle}{t^{\beta}l(t)}
 & = & \frac{\langle\delta -\delta_n,w_t\rangle}{t^{\beta}l(t)} +
 \frac{\langle\delta_n,w_t\rangle}{t^{\beta}l(t)}\\
 & \leqslant &
 \|\delta -\delta_n\|\cdot
 \frac{\|w_t\|}{t^{\beta}l(t)} +
 \frac{\langle\delta_n,w_t\rangle}{t^{\beta}l(t)}\\
 & \leqslant &
 \|\delta -\delta_n\|M +
 \frac{\langle\delta_n,w_t\rangle}{t^{\beta}l(t)}.
\end{eqnarray*}
So, for every $n\in\mathbb N$, by Assumption \ref{signal_behavior} :
\begin{eqnarray*}
 \liminf_{t\rightarrow 0^+}
 \frac{\langle\delta,w_t\rangle}{t^{\beta}l(t)}
 & \leqslant &
 \|\delta -\delta_n\|M +
 \liminf_{t\rightarrow 0^+}
 \frac{\langle\delta_n,w_t\rangle}{t^{\beta}l(t)}\\
 & \leqslant &
 \|\delta -\delta_n\|M -\lambda\\
 & &
 \xrightarrow[n\rightarrow\infty]{}
 -\lambda.
\end{eqnarray*}
Since the right hand side of the previous inequality is not depending on $\delta$ :
\begin{displaymath}
\sup_{\delta\in\partial B_e(0,1)}
\liminf_{t\rightarrow 0^+}
\frac{\langle\delta,w_t\rangle}{t^{\beta}l(t)}
\leqslant
-\lambda.
\end{displaymath}
Moreover, by Assumption \ref{signal_behavior} and since $\mathcal B_e\subset\partial B(0,1)$ :
\begin{displaymath}
-\lambda =
\sup_{\delta\in\mathcal B_e}
\liminf_{t\rightarrow 0^+}
\frac{\langle\delta,w_t\rangle}{t^{\beta}l(t)}
\leqslant
\sup_{\delta\in\partial B_e(0,1)}
\liminf_{t\rightarrow 0^+}
\frac{\langle\delta,w_t\rangle}{t^{\beta}l(t)}.
\end{displaymath}
Therefore,
\begin{displaymath}
\sup_{\delta\in\partial B_e(0,1)}
\liminf_{t\rightarrow 0^+}
\frac{\langle\delta,w_t\rangle}{t^{\beta}l(t)} =
-\lambda.
\end{displaymath}
Similarly,
\begin{displaymath}
\inf_{\delta\in\partial B_e(0,1)}
\liminf_{t\rightarrow 0^+}
\frac{\langle\delta,w_t\rangle}{t^{\beta}l(t)} = -\mu.
\end{displaymath}
\end{proof}
%


%
\begin{proof}
\textit{Theorem \ref{viability_theorem}.(3).} Let $y$ be a solution of the rough differential equation $dy =\sigma(y)d\mathbf w$ with $y_0\in K$ as initial condition, and assume that $y$ is viable in $K$.
\\
\\
Consider $s\in T_K(y_0)^{\circ}$ and $\varepsilon > 0$. Since $K$ is a nonempty compact subset of $\mathbb R^d$, by Proposition \ref{tangent_cone_compact_sets}, there exists $\delta_{\varepsilon} > 0$ such that for every $x\in K\cap B_d(y_0,\delta_{\varepsilon})$,
\begin{displaymath}
\langle x - y_0,s\rangle
\leqslant
\varepsilon\|x - y_0\|.
\end{displaymath}
Since $y$ is continuous, there exists $t_{\varepsilon}\in [0,T]$ such that $y([0,t_{\varepsilon}])\subset B_d(y_0,\delta_{\varepsilon})$. So, for every $t\in [0,t_{\varepsilon}]$,
\begin{displaymath}
\langle y_t - y_0,s\rangle
\leqslant
\varepsilon\|y_t - y_0\|.
\end{displaymath}
Then,
\begin{displaymath}
\limsup_{t\rightarrow 0^+}
\frac{\langle y_t - y_0,s\rangle}{t^{\beta}l(t)}
\leqslant
\varepsilon
\limsup_{t\rightarrow 0^+}
\frac{\|y_t - y_0\|}{t^{\beta}l(t)}.
\end{displaymath}
For every $t\in [0,T]$, by Theorem \ref{Euler_scheme_convergence} applied with the dissection $(0,t)$ of $[0,t]$, there exists a constant $C > 0$, depending on $T$ but not on $t$, such that :
\begin{eqnarray*}
 \|y_t - y_0 -\sigma(y_0)w_t\|
 & \leqslant &
 Ct|(0,t)|^{(\lfloor\gamma\rfloor + 1)\alpha - 1}\\
 & \leqslant &
 CT^{(\lfloor\gamma\rfloor - 1)\alpha}t^{2\alpha}.
\end{eqnarray*}
Since $\beta < 2\alpha$,
\begin{displaymath}
\limsup_{t\rightarrow 0^+}
\frac{\langle\sigma(y_0)w_t,s\rangle}{t^{\beta}l(t)}
\leqslant
\varepsilon
\limsup_{t\rightarrow 0^+}
\frac{\|\sigma(y_0)w_t\|}{t^{\beta}l(t)}.
\end{displaymath}
Therefore, by duality in $\mathbb R^d$ :
\begin{equation}\label{control_necessary_condition_1}
\liminf_{t\rightarrow 0^+}
\frac{\langle u(s),w_t\rangle}{t^{\beta}l(t)}
\geqslant 0
\textrm{ $;$ }
\forall s\in T_K(y_0)^{\circ}
\end{equation}
where, $u : T_K(y_0)^{\circ}\rightarrow\mathbb R^e$ is the map defined by
\begin{displaymath}
u(s) :=
-\sigma(y_0)^{\tau}s
\textrm{ $;$ }
\forall s\in T_K(y_0)^{\circ},
\end{displaymath}
and $\sigma(y_0)^{\tau}$ is the transpose of the matrix $\sigma(y_0)$.
\\
\\
Assume that there exists $s\in T_K(y_0)^{\circ}$ such that $u(s)\not= 0$, and put
\begin{displaymath}
v(s) :=\frac{u(s)}{\|u(s)\|}
\in\partial B_e(0,1).
\end{displaymath}
By Inequality (\ref{control_necessary_condition_1}) :
\begin{displaymath}
\liminf_{t\rightarrow 0^+}
\frac{\langle v(s),w_t\rangle}{t^{\beta}l(t)}
\geqslant 0.
\end{displaymath}
There is a contradiction with Assumption \ref{signal_behavior} by Proposition \ref{extension_signal_behavior}. So, necessarily :
\begin{displaymath}
u(s) = 0
\textrm{ $;$ }
\forall s\in T_K(y_0)^{\circ}.
\end{displaymath}
Therefore, since $(e_k)_{k\in\llbracket 1,e\rrbracket}$ is a basis of $\mathbb R^e$ :
\begin{displaymath}
\langle\sigma_{.,k}(y_0),s\rangle = 0
\textrm{ $;$ }
\forall k\in\llbracket 1,e\rrbracket.
\end{displaymath}
In particular, $\pm\sigma_{.,k}(y_0)\in T_K(y_0)^{\circ\circ}$ for every $k\in\llbracket 1,e\rrbracket$. This achieves the proof because $y_0\in K$ has been arbitrarily chosen.
\end{proof}
%


%
\subsection{Necessary condition of invariance : convex case}
When $K$ is convex, the purpose of this subsection is to prove that under the Assumptions \ref{gamma_Lipschitz_b_sigma} and \ref{signal_behavior}, if $K$ is invariant for $(b,\sigma,\mathbb W)$, then $b$ and $\sigma$ satisfy Assumption \ref{viability_assumption} (Theorem \ref{viability_theorem}.(2b)). First, that result is proved for the half-hyperplanes.
%


%
\begin{lemma}\label{necessary_viability_lemma}
Under the Assumptions \ref{geometric_rough_path_w}, \ref{non_explosion_b_sigma}, \ref{gamma_Lipschitz_b_sigma} and \ref{signal_behavior}, if there exists $\nu\in\llbracket 1,d\rrbracket$ such that $D_{\nu} =\{x\in\mathbb R^d : x^{(\nu)}\geqslant 0\}$ is invariant for $(b,\sigma,\mathbb W)$, then
\begin{displaymath}
b^{(\nu)}(x - x^{(\nu)}e_{\nu})\geqslant 0
\textrm{ and }\sigma_{\nu,.}(x - x^{(\nu)}e_{\nu}) = 0
\end{displaymath}
for every $x\in\mathbb R^d$.
\end{lemma}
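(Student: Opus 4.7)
Fix $x\in\mathbb R^d$ and set $y_0:=x-x^{(\nu)}e_\nu$, so that $y_0\in\partial D_\nu$. Under Assumption \ref{gamma_Lipschitz_b_sigma} the solution $y:=\pi_{f_{b,\sigma}}(0,y_0;\mathbb W)$ of $dy=f_{b,\sigma}(y)d\mathbb W$ is unique, and the invariance hypothesis forces $y_t^{(\nu)}\geq 0$ for every $t\in[0,T]$. The plan is to split the argument in two: first establish the pointwise identity $\sigma_{\nu,.}(y_0)=0$, then derive $b^{(\nu)}(y_0)\geq 0$. A key intermediate observation lies in between: since the first step works at every point of $\partial D_\nu$, the vanishing of $\sigma_{\nu,.}$ upgrades from pointwise to valid on the whole hyperplane, which is precisely what makes the analysis of the drift feasible.

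\textbf{Vanishing of $\sigma_{\nu,.}(y_0)$.} Apply Theorem \ref{Euler_scheme_convergence} to the trivial dissection $(0,t)$ of $[0,t]$, giving $\|y_t-\widehat y_t^1\|\leq Ct^{\theta}$ with $\theta=(\lfloor\gamma\rfloor+1)\alpha>1$. The levels $k\geq 2$ of the Euler step contribute $O(t^{k\alpha})=O(t^{2\alpha})$ since $|\mathbb W_{0,t}^{(k)}|\leq C\|\mathbb W\|_{\alpha\text{-H\"ol},T}^k t^{k\alpha}$, and $\theta\geq 2\alpha$ because $\gamma>1/\alpha\geq 1$ implies $\lfloor\gamma\rfloor\geq 1$. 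Consequently
\[
y_t^{(\nu)}=b^{(\nu)}(y_0)t+\langle\sigma_{\nu,.}(y_0),w_t\rangle+\rho_t,\qquad|\rho_t|\leq C't^{2\alpha}.
\]
Dividing by $t^{\beta}l(t)$ with the $\beta\in(0,2\alpha\wedge 1)$ supplied by Assumption \ref{signal_behavior}, both $t^{1-\beta}/l(t)$ and $t^{2\alpha-\beta}/l(t)$ tend to zero as $t\to 0^+$ by the defining property of $\mathcal S_{t_0}$. Since $y_t^{(\nu)}\geq 0$, one deduces
\[
\liminf_{t\to 0^+}\frac{\langle\sigma_{\nu,.}(y_0),w_t\rangle}{t^\beta l(t)}\geq 0.
\]
If $\sigma_{\nu,.}(y_0)\neq 0$, normalizing $\delta:=\sigma_{\nu,.}(y_0)/\|\sigma_{\nu,.}(y_0)\|\in\partial B_e(0,1)$ produces a direction of the unit sphere whose $\liminf$ is, by Proposition \ref{extension_signal_behavior}, at most $-\lambda<0$, a contradiction. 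Hence $\sigma_{\nu,.}(y_0)=0$.

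\textbf{Proof of $b^{(\nu)}(y_0)\geq 0$.} The previous argument applies to every $y_0\in\partial D_\nu$, so $\sigma_{\nu,.}$ vanishes on the whole hyperplane. Since Assumption \ref{gamma_Lipschitz_b_sigma} yields $\sigma\in C^{\lfloor\gamma\rfloor}$ with $\lfloor\gamma\rfloor\geq[1/\alpha]\geq 1$, the tangential derivatives satisfy $\partial_j\sigma_{\nu,k}(y_0)=0$ for every $y_0\in\partial D_\nu$, every $k$, and every $j\neq\nu$. Now expand $\widehat y_t^1-y_0=\sum_{k=1}^{[1/\alpha]}\sum_{i_1,\ldots,i_k}V_{i_1}\cdots V_{i_k}I(y_0)\mathbb W_{0,t}^{(k),i_1,\ldots,i_k}$ with $V_1:=b$ and $V_\ell:=\sigma_{.,\ell-1}$ for $\ell\geq 2$, and sort the multi-indices by the number $r$ of entries in $\{2,\ldots,e+1\}$. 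For a pure-spatial multi-index ($r=k$), induction on $k$ yields $[V_{i_1}\cdots V_{i_k}I]^{(\nu)}(y_0)=0$: the base $k=1$ is the previous step; the inductive step writes $[V_{i_1}g](y_0)=\sum_j V_{i_1}^{(j)}(y_0)\partial_j g(y_0)$ with $g:=[V_{i_2}\cdots V_{i_k}I]^{(\nu)}$ vanishing on $\partial D_\nu$, and each summand dies either through $V_{i_1}^{(\nu)}(y_0)=\sigma_{\nu,i_1-1}(y_0)=0$ (if $j=\nu$) or through $\partial_j g(y_0)=0$ (if $j\neq\nu$, as a tangential derivative of a function zero on the hyperplane). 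For a mixed multi-index ($r\leq k-1$), $|\mathbb W_{0,t}^{(k),i_1,\ldots,i_k}|\leq Ct^{k-r(1-\alpha)}$ with $k-r(1-\alpha)\geq 1+(k-1)\alpha>1$, so such a term is $o(t)$. Combined with the $O(t^\theta)=o(t)$ Euler remainder and the vanishing of the $k=1$, $i_1\geq 2$ contribution via $\sigma_{\nu,.}(y_0)=0$, this produces $y_t^{(\nu)}=b^{(\nu)}(y_0)t+o(t)$. Dividing by $t$ in $y_t^{(\nu)}\geq 0$ and letting $t\to 0^+$ delivers $b^{(\nu)}(y_0)\geq 0$.

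\textbf{Anticipated obstacle.} The delicate step is the pure-spatial induction: the pointwise identity $\sigma_{\nu,.}(y_0)=0$ alone does not kill the higher-order Euler coefficients. What makes the induction work is that the first step actually proves $\sigma_{\nu,.}\equiv 0$ along the \emph{entire} hyperplane $\partial D_\nu$, which is what forces tangential derivatives to vanish and makes every cascade of $V_{i_\ell}$-compositions collapse. The remaining bookkeeping --- the first-order Euler analysis, the time-index weighting for mixed iterated integrals, and the final passage to the limit at order $t$ --- reduces to the quantitative rate of Theorem \ref{Euler_scheme_convergence} and the slow-variation property of $\mathcal S_{t_0}$.
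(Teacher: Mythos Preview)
Your proof is correct and follows the same two-step strategy as the paper: first extract $\sigma_{\nu,.}(y_0)=0$ from the one-step Euler expansion combined with Assumption \ref{signal_behavior} (you invoke Proposition \ref{extension_signal_behavior} directly, whereas the paper reproduces the approximation argument), then exploit the vanishing of $\sigma_{\nu,.}$ along the whole hyperplane $\partial D_\nu$ to annihilate the higher-order pure-spatial Euler coefficients and isolate the drift contribution. Your explicit handling of the mixed time--space iterated integrals via the Young-pairing scaling, together with the final division by $t$ rather than by $t^\beta l(t)$, is slightly cleaner than the paper's corresponding passage, but the key ideas are identical.
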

%


%
\begin{proof}
Let $\widehat y : [0,T]\rightarrow\mathbb R^d$ be the map defined by :
\begin{displaymath}
\widehat y_t :=
\mathfrak E^{\mathbb W_{0,t}}y_0
\textrm{ $;$ }
\forall t\in [0,T].
\end{displaymath}
For every $t\in [0,T]$, $(y_0,\widehat y_t)$ coincides with the Euler scheme for the rough differential equation $dy = f_{b,\sigma}(y)d\mathbb W$ with $y_0\in\mathbb R^d$ as initial condition along the dissection $D_t := (0,t)$ of $[0,t]$.
\\
\\
In a first step, it is proved that if there exists $y_0\in D_{\nu}$ such that
\begin{equation}\label{solution_bound_RS}
\liminf_{t\rightarrow 0^+}\frac{\widehat y_{t}^{(\nu)}}{t^{\beta}l(t)} < 0,
\end{equation}
then $D_{\nu}$ is not invariant for $(b,\sigma,\mathbb W)$. In a second step, it is proved that if there exists $y_0\in\partial D_{\nu}$ such that $\sigma_{\nu,.}(y_0)\not= 0$ or $b^{(\nu)}(y_0) < 0$, then $\widehat y$ satisfies Inequality (\ref{solution_bound_RS}).
\\
\\
\textbf{Step 1.} Assume that there exists $y_0\in D_{\nu}$ such that :
\begin{displaymath}
\liminf_{t\rightarrow 0^+}
\frac{\widehat y_{t}^{(\nu)}}{t^{\beta}l(t)} < 0.
\end{displaymath}
For every $t\in [0,T]$, by Theorem \ref{Euler_scheme_convergence} applied with the dissection $(0,t)$ of $[0,t]$, there exists a constant $C_1 > 0$, depending on $T$ but not on $t$, such that
\begin{eqnarray*}
 \|\pi_{f_{b,\sigma}}(0,y_0;\mathbb W)_t -\widehat y_t\|
 & \leqslant & C_1t|D_t|^{\theta - 1}\\
 & = & C_1t^{\theta}
\end{eqnarray*}
with $\theta := (\lfloor\gamma\rfloor + 1)\alpha > 1$. So,
\begin{displaymath}
\frac{\pi_{f_{b,\sigma}}^{(\nu)}(0,y_0;\mathbb W)_t}{t^{\beta}l(t)}
\leqslant
C_1\frac{t^{\theta -\beta}}{l(t)} +
\frac{\widehat y_{t}^{(\nu)}}{t^{\beta}l(t)}
\textrm{ $;$ }
\forall t\in (0,T_0].
\end{displaymath}
Moreover, since $\theta > 1 >\beta$ and $l\in\mathcal S_{t_0}$ :
\begin{displaymath}
\liminf_{t\rightarrow 0^+}\left[
C_1\frac{t^{\theta -\beta}}{l(t)} +
\frac{\widehat y_{t}^{(\nu)}}{t^{\beta}l(t)}\right] =
\liminf_{t\rightarrow 0^+}
\frac{\widehat y_{t}^{(\nu)}}{t^{\beta}l(t)}.
\end{displaymath}
Therefore, by Inequality (\ref{solution_bound_RS}) :
\begin{displaymath}
\liminf_{t\rightarrow 0^+}
\frac{\pi_{f_{b,\sigma}}^{(\nu)}(0,y_0;\mathbb W)_t}{t^{\beta}l(t)} < 0.
\end{displaymath}
In conclusion, there exists $t_1\in [0,T_0]$ such that :
\begin{displaymath}
\pi_{f_{b,\sigma}}^{(\nu)}(0,y_0;\mathbb W)_{t_1} < 0.
\end{displaymath}
The path $\pi_{f_{b,\sigma}}^{(\nu)}(0,y_0;\mathbb W)$ is not viable in $D_{\nu}$.
\\
\\
\textbf{Step 2.} Let us show that if there exists $y_0\in\partial D_{\nu}$ such that $\sigma_{\nu,.}(y_0)\not= 0$ or $b^{(\nu)}(y_0) < 0$, then $\widehat y$ satisfies Inequality (\ref{solution_bound_RS}).
\\
\\
\textit{Case 1.} Assume that there exists $y_0\in\partial D_{\nu}$ such that $\sigma_{\nu,.}(y_0)\not= 0$. By \mbox{Lemma \ref{upper_bound_w} :}
\begin{displaymath}
M :=
\max_{k\in\llbracket 1,e\rrbracket}
\sup_{t\in [0,T_0]}
\left|\frac{w_{t}^{(k)}}{t^{\beta}l(t)}\right| <\infty.
\end{displaymath}
On the one hand, since $\|\sigma_{\nu,.}(y_0)\|^{-1}\sigma_{\nu,.}(y_0)\in\partial B_e(0,1)$ and $\overline{\mathcal B_e} =\partial B_e(0,1)$, there exists a sequence $(u_n)_{n\in\mathbb N}$ of elements of $\mathcal B_e$ such that :
\begin{displaymath}
\lim_{n\rightarrow\infty}
\left\|u_n -\frac{\sigma_{\nu,.}(y_0)}{\|\sigma_{\nu,.}(y_0)\|}\right\| = 0.
\end{displaymath}
So, there exists $n_0\in\mathbb N$ such that for every $n\in\mathbb N\cap [n_0,\infty[$,
\begin{displaymath}
\|\sigma_{\nu,.}(y_0) - v_n\|\leqslant
\frac{\lambda}{4eM}\|\sigma_{\nu,.}(y_0)\|
\end{displaymath}
and $v_n :=\|\sigma_{\nu,.}(y_0)\|u_n$ where, $\lambda$ is defined in Assumption \ref{signal_behavior}. Then, for every $n\in\mathbb N\cap [n_0,\infty[$,
\begin{eqnarray}
 \sup_{t\in [0,T_0]}
 \frac{1}{t^{\beta}l(t)}
 |\langle\sigma_{\nu,.}(y_0) - v_n,w_t\rangle|
 & \leqslant &
 eM\|\sigma_{\nu,.}(y_0) - v_n\|
 \nonumber\\
 \label{necessary_viability_lemma_2}
 & \leqslant &
 \frac{\lambda}{4}\|\sigma_{\nu,.}(y_0)\|.
\end{eqnarray}
On the other hand, by the definition of the Euler scheme for Equation (\ref{main_equation}), there exists $C_2 > 0$ such that :
\begin{displaymath}
|\mathcal E_{f_{b,\sigma}}^{(\nu)}(y_0,\mathbb W_{0,t}) -\langle\sigma_{\nu,.}(y_0),w_t\rangle|
\leqslant C_2(t^{2\alpha}\vee t)
\textrm{ $;$ }
\forall t\in [0,T].
\end{displaymath}
Since $l\in\mathcal S_{t_0}$ and $\beta\in (0,2\alpha\wedge 1)$ :
\begin{eqnarray}
 \lim_{t\rightarrow 0^+}
 \frac{1}{t^{\beta}l(t)}
 |\mathcal E_{f_{b,\sigma}}^{(\nu)}(y_0,\mathbb W_{0,t}) -\langle\sigma_{\nu,.}(y_0),w_t\rangle| & \leq  &
 C_2\lim_{t\rightarrow 0^+}\frac{t^{2\alpha -\beta}}{l(t)}
 \nonumber\\
 \label{necessary_viability_lemma_3}
 & = & 0.
\end{eqnarray}
For every $n\in\mathbb N\cap [n_0,\infty[$, by Assumption \ref{signal_behavior}, Inequality (\ref{necessary_viability_lemma_2}) and Equality (\ref{necessary_viability_lemma_3}) together :
\begin{eqnarray*}
 \liminf_{t\rightarrow 0^+}
 \frac{\widehat y_{t}^{(\nu)}}{t^{\beta}l(t)} & = &
 \liminf_{t\rightarrow 0^+}
 \frac{\langle\sigma_{\nu,.}(y_0),w_t\rangle}{t^{\beta}l(t)}\\
 & \leqslant &
 \sup_{t\in [0,T_0]}\frac{1}{t^{\beta}l(t)}|\langle\sigma_{\nu,.}(y_0) - v_n,w_t\rangle| +
 \liminf_{t\rightarrow 0^+}
 \frac{\langle v_n,w_t\rangle}{t^{\beta}l(t)}\\
 & \leqslant &
 \frac{\lambda}{4}\|\sigma_{\nu,.}(y_0)\|
 -\lambda\|\sigma_{\nu,.}(y_0)\|
 = -\frac{3\lambda}{4}\|\sigma_{\nu,.}(y_0)\|.
\end{eqnarray*}
So,
\begin{displaymath}
\liminf_{t\rightarrow 0^+}
 \frac{\widehat y_{t}^{(\nu)}}{t^{\beta}l(t)} < 0.
\end{displaymath}
By the first step of the proof, it means that $D_{\nu}$ is not invariant for $(b,\sigma,\mathbb W)$. In other words, if $K$ is invariant for $(b,\sigma,\mathbb W)$, then
\begin{equation}\label{viability_assumption_sigma}
\sigma_{\nu,.}(x) = 0
\textrm{ $;$ }
\forall x\in\partial D_{\nu}.
\end{equation}
\textit{Case 2.} Assume that $D_{\nu}$ is invariant for $(b,\sigma,\mathbb W)$ and there exists $y_0\in\partial D_{\nu}$ such that $b^{(\nu)}(y_0) < 0$. By the first case, since $D_{\nu}$ is invariant for $(b,\sigma,\mathbb W)$, Equation (\ref{viability_assumption_sigma}) is true.
\\
\\
Let $t\in [0,T]$ be arbitrarily chosen.
\begin{itemize}
 \item If $\alpha\in (1/2,1)$, then
 \begin{eqnarray*}
  \widehat y_{t}^{(\nu)} & = & b^{(\nu)}(y_0)t +\langle\sigma_{\nu,.}(y_0),w_t\rangle\\
  & = &
  b^{(\nu)}(y_0)t.
 \end{eqnarray*}
 \item If $\alpha\in (0,1/2]$, then
 \begin{displaymath}
 \widehat y_{t}^{(\nu)} =
 b^{(\nu)}(y_0)t +
 \langle\sigma_{\nu,.}(y_0),w_t\rangle +
 \sum_{k = 2}^{[1/\alpha]}\sum_{i_1,\dots,i_k = 1}^{e}
 [\sigma_{i_1}\dots\sigma_{i_k}I(y_0)]^{(\nu)}\mathbf w^{(k),i_1,\dots,i_k}.
 \end{displaymath}
 Consider $k\in\llbracket 2,[1/\alpha]\rrbracket$ and $i_1,\dots,i_k\in\llbracket 1,e\rrbracket$. There exists a real family $(\rho_{i_2,\dots,i_{k - 1},l_1,\dots,l_{k - 1}}(y_0))_{(l_1,\dots,l_{k - 1})\in\llbracket 1,d\rrbracket^{k - 1}}$ such that :
 \begin{displaymath}
 [\sigma_{i_1}\dots\sigma_{i_k}I(y_0)]^{(\nu)} =
 \sum_{l_1,\dots,l_{k - 1} = 1}^{d}
 \sigma_{l_1,i_1}(y_0)\partial_{l_1,\dots,l_{k - 1}}^{k - 1}\sigma_{\nu,i_k}(y_0)
 \rho_{i_2,\dots,i_{k - 1},l_1,\dots,l_{k - 1}}(y_0).
 \end{displaymath}
 Consider the set
 \begin{displaymath}
 I_{k - 1} :=
 \{(l_1,\dots,l_{k - 1})\in\llbracket 1,d\rrbracket^{k - 1} :
 \exists\kappa\in\llbracket 1,k - 1\rrbracket\textrm{$,$ }
 l_{\kappa}\not=\nu\}.
 \end{displaymath}
 By (\ref{viability_assumption_sigma}) together with Schwarz's lemma :
 \begin{displaymath}
 \partial_{l_1,\dots,l_{k - 1}}^{k - 1}\sigma_{\nu,.}(x) = 0
 \textrm{ $;$ }
 \forall x\in\partial D_{\nu}
 \textrm{$,$ }
 \forall (l_1,\dots,l_{k - 1})\in I_{k - 1}.
 \end{displaymath}
 Then,
 \begin{small}
 \begin{eqnarray*}
  [\sigma_{i_1}\dots\sigma_{i_k}I(y_0)]^{(\nu)} & = &
  \sigma_{\nu,i_1}(y_0)\partial_{\nu,\dots,\nu}^{k - 1}\sigma_{\nu,i_k}(y_0)
  \rho_{i_2,\dots,i_{k - 1},\nu,\dots,\nu}(y_0) +\\
  & &
  \sum_{l_1,\dots,l_{k - 1}\in I_{k  - 1}}
  \sigma_{l_1,i_1}(y_0)\partial_{l_1,\dots,l_{k - 1}}^{k - 1}\sigma_{\nu,i_k}(y_0)
  \rho_{i_2,\dots,i_{k - 1},l_1,\dots,l_{k - 1}}(y_0)\\
  & = & 0.
 \end{eqnarray*}
 \end{small}
 \newline
 So,
 \begin{displaymath}
 \widehat y_{t}^{(\nu)} =
 b^{(\nu)}(y_0)t.
 \end{displaymath}
\end{itemize}
Since $l\in\mathcal S_{t_0}$ and $1 >\beta$, there exists $t_2\in (0,T_0]$ such that :
\begin{displaymath}
0 <\frac{t^{1-\beta}}{l(t)} < 1
\textrm{ $;$ }
\forall t\in (0,t_2].
\end{displaymath}
Therefore,
\begin{eqnarray*}
 \liminf_{t\rightarrow 0^+}
 \frac{\widehat y_{t}^{(\nu)}}{t^{\beta}l(t)}
 & \leqslant &
 \liminf_{t\rightarrow 0^+}
 \frac{\widehat y_{t}^{(\nu)}}{t}\\
 & \leqslant &
 b^{(\nu)}(y_0) < 0.
\end{eqnarray*}
This achieves the proof because there is a contradiction by the first step of the proof.
\end{proof}
\noindent
Via Lemma \ref{necessary_viability_lemma}, let us prove that Assumption \ref{viability_assumption} is necessary to get the invariance of $K$ for $(b,\sigma,\mathbb W)$.
%


%
\begin{proof}
\textit{Theorem \ref{viability_theorem}.(2b).} In a first step, it is proved that if the half-space
\begin{displaymath}
H_{a,s} :=\{x\in\mathbb R^d :\langle s,x - a\rangle\leqslant 0\}
\end{displaymath}
with $a\in\mathbb R^d$ and $s\in\mathbb R^d\backslash\{0\}$ is invariant for $(b,\sigma,\mathbb W)$, then
\begin{displaymath}
\langle b(x),s\rangle\leqslant 0
\end{displaymath}
and
\begin{displaymath}
\langle\sigma_{.,k}(x),s\rangle = 0
\textrm{ $;$ }
\forall k\in\llbracket 1,e\rrbracket
\end{displaymath}
for every $x\in\partial H_{a,s}$. In a second step, this result is used to show that if $K$ is invariant for $(b,\sigma,\mathbb W)$, then $K\subset K_{b,\pm\sigma}$.
\\
\\
\textbf{Step 1.} Let $y_0\in H_{a,s}$ be arbitrarily chosen. Since $s\in\mathbb R^d\backslash\{0\}$, there exists $\nu\in\llbracket 1,d\rrbracket$ such that $s^{(\nu)}\not= 0$. Consider the map $U :\mathbb R^d\rightarrow\mathbb R^d$ defined by :
\begin{displaymath}
U(x) :=
-(x - a) +
(x^{(\nu)} - a^{(\nu)} -\langle x - a,s\rangle)e_{\nu}
\textrm{ $;$ }
\forall x\in\mathbb R^d.
\end{displaymath}
The map $U$ is one to one from $\mathbb R^d$ into itself, and
\begin{displaymath}
U^{-1}(x) =
-x + a -\frac{1}{s^{(\nu)}}
(x^{(\nu)} -\langle x,s\rangle)e_{\nu}
\textrm{ $;$ }
\forall x\in\mathbb R^d.
\end{displaymath}
Moreover, $U|_{H_{a,s}}$ (resp. $U|_{\partial H_{a,s}}$) is one to one from $H_{a,s}$ (resp. $\partial H_{a,s}$) into $D_{\nu}$ (resp. $\partial D_{\nu}$) where, $D_{\nu}$ is defined in Lemma \ref{necessary_viability_lemma}. For every $x,h\in\mathbb R^d$,
\begin{eqnarray*}
 DU(x).h & = &
 -h + (h^{(\nu)} -\langle h,s\rangle)e_{\nu}\\
 & = &
 M_Uh
\end{eqnarray*}
with
\begin{displaymath}
M_U :=
-I + e_{\nu,\nu} -
\sum_{k = 1}^{d}
s^{(k)}e_{\nu,k}.
\end{displaymath}
Consider the maps $B :\mathbb R^d\rightarrow\mathbb R^d$ and $S :\mathbb R^d\rightarrow\mathcal M_{d,e}(\mathbb R)$ defined by
\begin{eqnarray*}
 B(x) & := &
 M_Ub(U^{-1}(x))\\
 & = &
 (e_{\nu,\nu} - I)b(U^{-1}(x)) -
 \langle b(U^{-1}(x)),s\rangle e_{\nu}
\end{eqnarray*}
and
\begin{eqnarray*}
 S(x) & := &
 M_U\sigma(U^{-1}(x))\\
 & = &
 (e_{\nu,\nu} - I)\sigma(U^{-1}(x)) -
 \sum_{k = 1}^{e}
 \langle\sigma_{.,k}(U^{-1}(x)),s\rangle e_{\nu,k}
\end{eqnarray*}
for every $x\in\mathbb R^d$. Let $F :\mathbb R^d\rightarrow\mathcal M_{d,e + 1}(\mathbb R)$ be the map defined by :
\begin{displaymath}
F(x) :=
\sum_{k = 1}^{d}B^{(k)}(x)e_{k,1} +
\sum_{l = 2}^{e + 1}\sum_{k = 1}^{d}
S_{k,l}(x)e_{k,l}
\textrm{ $;$ }
\forall x\in\mathbb R^d.
\end{displaymath}
Since $U^{-1}\in\mathcal L(\mathbb R^d)$ and $b$ and $\sigma$ satisfy assumptions \ref{non_explosion_b_sigma} and \ref{gamma_Lipschitz_b_sigma}, $B$ and $S$ also. So, by Friz and Victoir \cite{FV10}, Theorem 10.26, Exercice 10.55 and Exercice 10.56, the rough differential equation $dz = F(z)d\mathbb W$ with $U(y_0)$ as initial condition has a unique solution $z$ on $[0,T]$. By the (rough) change of variable formula, for every $t\in [0,T]$,
\begin{eqnarray*}
 U^{-1}(z_t) & = &
 y_0 +
 \int_{0}^{t}
 M_{U}^{-1}dz_s\\
 & = &
 y_0 +
 \int_{0}^{t}
 f_{b,\sigma}(U^{-1}(z_s))dW_s.
\end{eqnarray*}
Therefore, since $d_y = f_{b,\sigma}(y) dW$ has a unique solution, 
\begin{equation}\label{necessary_viability_theorem_1}
\pi_F(0,.;\mathbb W) =
U(\pi_{f_{b,\sigma}}(0,U^{-1}(.);\mathbb W)).
\end{equation}
Assume that $H_{a,s}$ is invariant for $(b,\sigma,\mathbb W)$. Since $U|_{H_{a,s}}$ is one to one from $H_{a,s}$ into $D_{\nu}$, by Equality (\ref{necessary_viability_theorem_1}), $D_{\nu}$ is invariant for $(B,S,\mathbb W)$. So, by Lemma \ref{necessary_viability_lemma} :
\begin{displaymath}
B^{(\nu)}(x - x^{(\nu)}e_{\nu})\geqslant 0
\textrm{ and }
S_{\nu,.}(x - x^{(\nu)}e_{\nu}) = 0
\end{displaymath}
for every $x\in\mathbb R^d$. Let $k\in\llbracket 1,e\rrbracket$ and $x\in\partial H_{a,s}$ be arbitrarily chosen. Since $U|_{\partial H_{a,s}}$ is one to one from $\partial H_{a,s}$ into $\partial D_{\nu}$, $U(x)\in\partial D_{\nu}$. Therefore, by construction of $B$ and $S$ :
\begin{displaymath}
\langle b(x),s\rangle =
-B^{(\nu)}(U(x))\leqslant 0
\end{displaymath}
and
\begin{eqnarray*}
 \langle\sigma_{.,k}(x),s\rangle & = &
 -\sum_{j = 1}^{d}\sigma_{j,k}(x)s^{(j)}\\
 & = &
 -S_{\nu,k}(U(x)) = 0.
\end{eqnarray*}
\textbf{Step 2.} Assume that there exists $y_0\in\partial K$ such that $y_0\not\in K_{b,\pm\sigma}$. Then, there exists $s\in N_K(y_0)$ such that :
\begin{equation}\label{necessary_viability_theorem_2}
\langle b(y_0),s\rangle > 0
\textrm{ or }
(\exists k\in\llbracket 1,e\rrbracket :
\langle\sigma_{.,k}(y_0),s\rangle\not= 0).
\end{equation}
Consider the half-space
\begin{displaymath}
H_{y_0,s} :=\{x\in\mathbb R^d :\langle s,x - y_0\rangle\leqslant 0\}.
\end{displaymath}
By the first step of the proof, (\ref{necessary_viability_theorem_2}) implies that there exists $t\in [0,T]$ such that $\pi_{f_{b,\sigma}}(0,y_0;\mathbb W)_t\not\in H_{y_0,s}$. Moreover, since $y_0\in\partial K$ and $s\in N_K(y_0)$, $K\subset H_{y_0,s}$. Therefore, $\pi_{f_{b,\sigma}}(0,y_0;\mathbb W)_t\not\in K$. This achieves the proof by contraposition.
\end{proof}
%


%
\section{A comparison theorem for rough differential equations}
In this section, a comparison theorem for rough differential equations is proved by using the viability theorem of Section 3.
\\
\\
Consider a nonempty set $I\subset\llbracket 1,d\rrbracket$, and
\begin{displaymath}
K :=\{(x_1,x_2)\in (\mathbb R^d)^2 :\forall i\in I\textrm{, }x_{1}^{(i)}\leqslant x_{2}^{(i)}\}.
\end{displaymath}
The following comparison theorem is a consequence of Theorem \ref{viability_theorem}.
%


%
\begin{proposition}\label{comparison_theorem}
For $j\in\{1,2\}$, consider $b_j :\mathbb R^d\rightarrow\mathbb R^d$ and $\sigma^j :\mathbb R^d\rightarrow\mathcal M_{d,e}(\mathbb R)$ satisfying assumptions \ref{non_explosion_b_sigma} and \ref{gamma_Lipschitz_b_sigma}, and the map $f^j :\mathbb R^d\rightarrow\mathcal M_{d,e + 1}(\mathbb R)$ defined by :
\begin{displaymath}
f^j(x) :=
\sum_{k = 1}^{d}
b_{j}^{(k)}(x)e_{k,1} +
\sum_{l = 2}^{e + 1}
\sum_{k = 1}^{d}
\sigma_{k,l}^{j}(x)e_{k,l}
\textrm{ $;$ }
\forall x\in\mathbb R^d.
\end{displaymath}
Under the Assumptions \ref{geometric_rough_path_w} and \ref{signal_behavior}, the two following conditions are equivalent :
\begin{enumerate}
 \item For every $(y_{0}^{1},y_{0}^{2})\in K$, $i\in I$ and $t\in [0,T]$, $(y_{t}^{1})^{(i)}\leqslant (y_{t}^{2})^{(i)}$ where $y^j$ is the solution of the rough differential equation $dy^j = f^j(y^j)d\mathbb W$ with $y_{0}^{j}$ as initial condition for $j\in\{1,2\}$.
 \item For every $(x_1,x_2)\in K$ and $i\in I$, if $x_{1}^{(i)} = x_{2}^{(i)}$, then
 \begin{displaymath}
 b_{1}^{(i)}(x_1)\leqslant b_{2}^{(i)}(x_2)
 \end{displaymath}
 and
 \begin{displaymath}
 \sigma_{i,k}^{1}(x_1) =
 \sigma_{i,k}^{2}(x_2)
 \textrm{ $;$ }
 \forall k\in\llbracket 1,e\rrbracket.
 \end{displaymath}
\end{enumerate}
\end{proposition}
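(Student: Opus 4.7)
The plan is to realise the comparison statement as a viability statement for the \emph{pair} $(y^1,y^2)$ in $\mathbb R^{2d}$, driven by the same geometric rough path $\mathbb W$, and then invoke Theorem \ref{viability_theorem}.(2). Concretely, define $B:\mathbb R^{2d}\rightarrow\mathbb R^{2d}$ and $\Sigma:\mathbb R^{2d}\rightarrow\mathcal M_{2d,e}(\mathbb R)$ by
\[
B(x_1,x_2):=(b_1(x_1),b_2(x_2)),\qquad \Sigma(x_1,x_2):=\begin{pmatrix}\sigma^1(x_1)\\ \sigma^2(x_2)\end{pmatrix},
\]
and build the associated $f_{B,\Sigma}:\mathbb R^{2d}\rightarrow\mathcal M_{2d,e+1}(\mathbb R)$ as in Section 2. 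Since $b_j,\sigma^j$ satisfy Assumptions \ref{non_explosion_b_sigma} and \ref{gamma_Lipschitz_b_sigma}, so do $B$ and $\Sigma$, and the rough differential equation $dY=f_{B,\Sigma}(Y)d\mathbb W$ with initial condition $(y_0^1,y_0^2)$ admits a unique solution, which by uniqueness of the individual equations is exactly $Y=(y^1,y^2)$. Condition (1) of the proposition is therefore the statement that $K$ is invariant for $(B,\Sigma,\mathbb W)$.

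Next, I would observe that $K$ is a closed convex polyhedron: writing $s_i:=(e_i,-e_i)\in\mathbb R^{2d}$ for $i\in I$, one has $K=\bigcap_{i\in I}\{X\in\mathbb R^{2d}:\langle s_i,X\rangle\leqslant 0\}$. Applying the polyhedron case of Remark \ref{viability_assumption_remarks}.(4) to the product system, Assumption \ref{viability_assumption} for $(B,\Sigma)$ on $K$ is equivalent to: for every $(x_1,x_2)\in K$ and every $i\in I$ for which the $i$-th constraint is active, i.e.\ $x_1^{(i)}=x_2^{(i)}$,
\[
\langle s_i,B(x_1,x_2)\rangle\leqslant 0\quad\text{and}\quad \langle s_i,\Sigma_{\cdot,k}(x_1,x_2)\rangle=0\ \text{ for every } k\in\llbracket 1,e\rrbracket.
\]
Unfolding the definitions of $B$ and $\Sigma$, the first inequality reads $b_1^{(i)}(x_1)-b_2^{(i)}(x_2)\leqslant 0$ and the second equality reads $\sigma_{i,k}^{1}(x_1)-\sigma_{i,k}^{2}(x_2)=0$, which is precisely condition (2) of the proposition.

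It then remains to combine the two implications of Theorem \ref{viability_theorem}.(2). The sufficiency (2)$\Rightarrow$(1) follows from Theorem \ref{viability_theorem}.(2.a) applied to the convex set $K\subset\mathbb R^{2d}$ and the coefficients $B,\Sigma$, using only Assumptions \ref{geometric_rough_path_w} and \ref{non_explosion_b_sigma} (both available). The necessity (1)$\Rightarrow$(2) follows from Theorem \ref{viability_theorem}.(2.b), which additionally needs Assumptions \ref{gamma_Lipschitz_b_sigma} and \ref{signal_behavior}; here Assumption \ref{signal_behavior} is a hypothesis on the driving signal $w\in\mathbb R^e$ (unchanged by passing to the product system), and Assumption \ref{gamma_Lipschitz_b_sigma} for $B,\Sigma$ is inherited from $b_j,\sigma^j$.

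The only point requiring minor care is the bookkeeping when translating the general condition $K\subset K_{B,\pm\Sigma}$ into the explicit componentwise form, which is why invoking the polyhedron formulation of Remark \ref{viability_assumption_remarks}.(4) is the cleanest route; there is no real analytic obstacle, and everything else reduces to checking that the product RDE is well-posed and that the hypotheses transfer to $(B,\Sigma,\mathbb W)$.
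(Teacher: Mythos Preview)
Your proposal is correct and follows essentially the same route as the paper: lift to the product system in $\mathbb R^{2d}$, recognise $K$ as a closed convex polyhedron with normals $s_i=e_i-e_{d+i}$, and invoke Theorem \ref{viability_theorem}.(2) in both directions. The paper is slightly terser (it cites Milian \cite{MILIAN95} for the translation of condition (2) into $K\subset K_{B,\pm S}$ rather than unpacking Remark \ref{viability_assumption_remarks}.(4) explicitly), but the argument is the same.
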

%


%
\begin{proof}
The set $K$ is (isomorphe to) a nonempty closed convex polyhedron of $\mathbb R^{2d}$. Indeed,
\begin{displaymath}
K\cong\bigcap_{i\in I}\{x\in\mathbb R^{2d} :\langle s_i,x\rangle\leqslant 0\}
\end{displaymath}
with $s_i := e_i - e_{d + i}$ for every $i\in I$. Let $F :\mathbb R^{2d}\rightarrow\mathcal M_{2d,e + 1}(\mathbb R)$ be the map defined by :
\begin{displaymath}
F(x_1,x_2) :=
\sum_{l = 1}^{e + 1}
\sum_{k = 1}^{d}
[f_{k,l}^{1}(x_1)e_{k,l} +
f_{k,l}^{2}(x_2)e_{d + k,l}]
\textrm{ $;$ }
\forall (x_1,x_2)\in(\mathbb R^d)^2.
\end{displaymath}
Since $b_j$ and $\sigma^j$ satisfy Assumption \ref{non_explosion_b_sigma} for $j\in\{1,2\}$, $B := F_{.,1}$ and $S := (F_{.,k})_{k\in\llbracket 2,e + 1\rrbracket}$ also.
\\
\\
The first condition is equivalent to the invariance of $K$ for $(B,S,\mathbb W)$, and the second condition means that $K\subset K_{B,\pm S}$ (see Milian \cite{MILIAN95}, Theorem 2 (proof)). Therefore, these conditions are equivalent by Theorem \ref{viability_theorem}.
\end{proof}
%


%
\section{Invariance for differential equations driven by a fractional Brownian motion}
At Subsection 4.1, it is shown that the fractional Brownian motion satisfies assumptions \ref{geometric_rough_path_w} and \ref{signal_behavior}. Subsection 4.2 deals with the viability of the solutions of a multidimensional logistic equation driven by a fractional Brownian motion of Hurst parameter belonging to $(1/4,1)$.
%


%
\subsection{Fractional Brownian motion}
In this subsection, it is proved that the fractional Brownian motion satisfies assumptions \ref{geometric_rough_path_w} and \ref{signal_behavior}. So, the viability theorem proved at Section 3 (Theorem \ref{viability_theorem}) can be applied to differential equations driven by a fractional Brownian motion. In particular, it extends the results of Aubin and Da Prato \cite{AD90}.
\\
\\
First of all, let us remind the definition of the fractional Brownian motion.
%


%
\begin{definition}\label{fBm}
Let $(\textrm B_t)_{t\in [0,T]}$ be an $e$-dimensional centered Gaussian process. It is a fractional Brownian motion of Hurst parameter $H\in (0,1)$ if and only if,
\begin{displaymath}
\normalfont{\textrm{cov}}(B_{s}^{(i)},B_{t}^{(j)}) =
\frac{1}{2}(|t|^{2H} + |s|^{2H} - |t - s|^{2H})\delta_{i,j}
\end{displaymath}
for every $(i,j)\in\llbracket 1,e\rrbracket^2$ and $(s,t)\in [0,T]^2$.
\end{definition}
\noindent
About the fractional Brownian motion, the reader can refer to Nualart \cite{NUALART06}, Chapter 5.
\\
\\
Let $B := (B_t)_{t\in [0,T]}$ be an $e$-dimensional fractional Brownian motion of Hurst parameter $H\in (1/4,1)$. The associated canonical probability space is denoted by $(\Omega,\mathcal A,\mathbb P)$.
\\
\\
By Garcia-Rodemich-Rumsey's lemma (see Nualart \cite{NUALART06}, Lemma A.3.1), almost all the paths of $B$ are $\alpha$-H\"older continuous with $\alpha\in (0,H)$. The following proposition ensures that almost all the paths of $B$ satisfy also Assumption \ref{signal_behavior}.
%


%
\begin{proposition}\label{fbm_behavior}
For any countable set $\mathcal B_e\subset\partial B_e(0,1)$, almost surely,
\begin{displaymath}
\liminf_{t\rightarrow 0^+}
\frac{\langle x,B_t\rangle}{t^{\beta}l(t)} = -1
\textrm{ $;$ }
\forall x\in\mathcal B_e
\end{displaymath}
with $\beta = H$ and $l\in\mathcal S_{e^{-1}}$ defined by
\begin{displaymath}
l(t) :=
\sqrt{2\log\left(\log\left(\frac{1}{t}\right)\right)}
\textrm{ $;$ }
\forall t\in (0,e^{-1}].
\end{displaymath}
\end{proposition}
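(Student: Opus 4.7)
The plan is to reduce the multi-dimensional statement to a one-dimensional law of the iterated logarithm (LIL) for fractional Brownian motion at time zero, and then take a countable intersection.

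First, I would check that $l \in \mathcal S_{e^{-1}}$: it is continuous and positive on $(0,e^{-1})$, and since $\log\log(1/t) \to +\infty$ as $t \to 0^+$ while $t^\beta \to 0$, we have $t^\beta/l(t) \to 0$ for every $\beta > 0$. Next, I would fix $x \in \mathcal B_e$ and show that the real-valued process $X^x_t := \langle x, B_t\rangle$ is itself a one-dimensional fBm of Hurst parameter $H$. This is a direct covariance computation: since the coordinates of $B$ are independent,
\begin{displaymath}
E[X^x_s X^x_t] = \sum_{i=1}^e (x^{(i)})^2 \cdot \frac{1}{2}\bigl(|s|^{2H}+|t|^{2H}-|t-s|^{2H}\bigr) = \frac{1}{2}\bigl(|s|^{2H}+|t|^{2H}-|t-s|^{2H}\bigr),
\end{displaymath}
using $\|x\|=1$.

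The core analytic step is then the one-dimensional LIL at zero: for any 1D fBm $X$ of Hurst parameter $H$,
\begin{displaymath}
\liminf_{t \to 0^+} \frac{X_t}{t^H \sqrt{2\log\log(1/t)}} = -1 \quad \text{a.s.}
\end{displaymath}
This is a classical result (e.g. Arcones 1995), and if it is not invoked as a citation it can be obtained from the standard LIL at infinity via the time-inversion $W_t := t^{2H} X_{1/t}$. A direct covariance computation shows $W$ is again an fBm of Hurst parameter $H$, and the substitution $u = 1/t$ transforms the liminf at $0^+$ for $X$ into the liminf at $+\infty$ for $W$; the value $-1$ then comes from the classical Arcones/Marcus-Rosen type LIL for self-similar Gaussian processes.

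Once the liminf is established to equal $-1$ for each fixed $x \in \mathcal B_e$ on an almost sure event $\Omega_x$, the countability of $\mathcal B_e$ allows us to set $\Omega_0 := \bigcap_{x\in\mathcal B_e} \Omega_x$, which still satisfies $\mathbb P(\Omega_0)=1$; on $\Omega_0$ the conclusion holds simultaneously for every $x \in \mathcal B_e$. The main obstacle is the LIL at zero itself: everything else is a short reduction. Given that the paper cites Nualart rather than a dedicated fBm-LIL reference, I would expect either an explicit citation or a short derivation using the time-inversion trick above.
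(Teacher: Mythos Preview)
Your proposal is correct and follows essentially the same route as the paper: identify $\langle x,B_t\rangle$ as a one-dimensional fBm of Hurst parameter $H$ (the paper states this as equality in distribution with $B^{(1)}$, you spell out the covariance computation), invoke the one-dimensional LIL at zero, then intersect over the countable set $\mathcal B_e$. One minor correction to your closing remark: the paper does cite a dedicated fBm-LIL reference (Arcones \cite{ARCONES95} and Viitasaari \cite{VIITASAARI14}) rather than deriving it, so your time-inversion sketch is extra detail beyond what the paper provides.
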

%


%
\begin{proof}
By the law of the iterated logarithm for the 1-dimensional fractional Brownian motion (see Arcones \cite{ARCONES95} and Viitasaari \cite{VIITASAARI14}, Remark 2.3.3) :
\begin{displaymath}
\mathbb P\left[
\liminf_{t\rightarrow 0^+}
\frac{B_{t}^{(1)}}{t^{\beta}l(t)} = -1\right] = 1.
\end{displaymath}
Consider $x\in\partial B_e(0,1)$. Since $(\langle x,B_t\rangle)_{t\in [0,T]}\stackrel{d}{=}B^{(1)}$ :
\begin{displaymath}
\mathbb P\left[
\liminf_{t\rightarrow 0^+}
\frac{\langle x,B_t\rangle}{t^{\beta}l(t)} = -1\right] = 1.
\end{displaymath}
Therefore, since $\mathcal B_e$ is a countable subset of $\partial B_e(0,1)$ :
\begin{displaymath}
\mathbb P\left[
\bigcap_{x\in\mathcal B_e}
\left\{
\liminf_{t\rightarrow 0^+}
\frac{\langle x,B_t\rangle}{t^{\beta}l(t)} = -1
\right\}\right] = 1.
\end{displaymath}
This achieves the proof.
\end{proof}
\noindent
By Friz and Victoir \cite{FV10}, Proposition 15.5 and Theorem 15.33, there exists an enhanced Gaussian process $\mathbf B : (\Omega,\mathcal A)\rightarrow G\Omega_{\alpha,T}(\mathbb R^e)$ such that $\mathbf B^{(1)} = B$. So, the signal $B$ satisfies assumptions \ref{geometric_rough_path_w} and \ref{signal_behavior}.
\\
\\
Let $W := (W_t)_{t\in [0,T]}$ be the stochastic process defined by :
\begin{displaymath}
W_t :=
te_1 +\sum_{k = 1}^{e}B_{t}^{(k)}e_{k + 1}
\textrm{ $;$ }
\forall t\in [0,T].
\end{displaymath}
By Friz and Victoir \cite{FV10}, Theorem 9.26, there exists an enhanced stochastic process $\mathbb W : (\Omega,\mathcal A)\rightarrow G\Omega_{\alpha,T}(\mathbb R^{e + 1})$ such that $\mathbb W^{(1)} := W$.
\\
\\
Consider $\alpha\in (0,H)$ and a nonempty closed set $K\subset\mathbb R^d$.
%


%
\begin{proposition}\label{viability_fbm_theorem}
Under the Assumptions \ref{non_explosion_b_sigma} and \ref{gamma_Lipschitz_b_sigma} on $b$ and $\sigma$ :
\begin{enumerate}
 \item Under Assumption \ref{viability_assumption_weak}, $K$ is invariant for $(b,\sigma,\mathbb W)$.
 \item When $K$ is compact and $b\equiv 0$, if $K$ is invariant for $(\sigma,\mathbf B)$, then Assumption \ref{viability_assumption} is fulfilled.
 \item When $K$ is convex, it is invariant for $(b,\sigma,\mathbb W)$ if and only if Assumption \ref{viability_assumption} is fulfilled.
\end{enumerate}
\end{proposition}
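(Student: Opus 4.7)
The plan is to reduce each of the three assertions to the corresponding part of Theorem \ref{viability_theorem} applied pathwise on a fixed almost-sure event of $\Omega$. The hypotheses imposed on $b$ and $\sigma$ are already in place, so what I will have to check is that the driving signals admit a geometric rough path lift (Assumption \ref{geometric_rough_path_w}) and satisfy Assumption \ref{signal_behavior} on a common full-measure event.

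For Assumption \ref{geometric_rough_path_w} there is nothing to do: the enhanced processes $\mathbf B$ and $\mathbb W$ are produced by Friz and Victoir \cite{FV10} (Proposition 15.5 and Theorem 15.33 for $\mathbf B$, then Theorem 9.26 for $\mathbb W$), as recorded just before the statement. For Assumption \ref{signal_behavior} I would fix once and for all a countable dense subset $\mathcal B_e$ of $\partial B_e(0,1)$ containing $\{\pm e_k:k\in\llbracket 1,e\rrbracket\}$---for example the normalization of the nonzero points of $\mathbb Q^e$. Proposition \ref{fbm_behavior} then yields a full-measure event $\Omega_0$ on which
\[
\liminf_{t\to 0^+}\frac{\langle \delta,B_t\rangle}{t^H\,l(t)} = -1,\qquad \forall \delta\in\mathcal B_e,
\]
with $l(t):=\sqrt{2\log\log(1/t)}\in\mathcal S_{e^{-1}}$. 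Taking $\beta:=H$, $\lambda=\mu=1$ and $t_0=e^{-1}$, this is exactly Assumption \ref{signal_behavior} on $\Omega_0$, provided one restricts to H\"older exponents $\alpha$ with $H<2\alpha$; such $\alpha$ exist in $(0,H)$ because $H>1/4$, so any $\alpha\in(H/2,H)$ works.

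On $\Omega_0$ the three claims then follow term by term from Theorem \ref{viability_theorem}: assertion (1) is Theorem \ref{viability_theorem}.(1); assertion (2) is Theorem \ref{viability_theorem}.(3), applied with the signal $\mathbf B$ (only $B$ enters the analysis since $b\equiv 0$, so the time augmentation is irrelevant); and assertion (3) combines Theorem \ref{viability_theorem}.(2.a) for sufficiency with Theorem \ref{viability_theorem}.(2.b) for necessity, both invoked pathwise, using that Assumption \ref{gamma_Lipschitz_b_sigma} has been assumed.

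The only delicacy is the compatibility of the H\"older index $\alpha$ with the exponent $\beta=H$ forced by the law of the iterated logarithm: the necessary implications in Theorem \ref{viability_theorem} require $\beta<2\alpha$, so $\alpha$ must be chosen close enough to $H$. Since the notions of invariance and of $K_{b,\pm\sigma}$ do not depend on the particular $\alpha\in(0,H)$ used to carry out the rough-path analysis, this restriction is harmless. I do not anticipate any further obstacle, as all the probabilistic input has been localized in Proposition \ref{fbm_behavior} and the Friz--Victoir construction of $\mathbf B$.
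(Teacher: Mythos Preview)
Your proposal is correct and is precisely the approach the paper takes: its proof is the single line ``Straightforward application of Theorem \ref{viability_theorem},'' relying on the verification of Assumptions \ref{geometric_rough_path_w} and \ref{signal_behavior} already carried out via Proposition \ref{fbm_behavior} and the Friz--Victoir construction. Your remark that the necessary implications force $\alpha\in(H/2,H)$ so that $\beta=H<2\alpha$ is a useful clarification the paper leaves implicit.
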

%


%
\begin{proof}
Straightforward application of Theorem \ref{viability_theorem}.
\end{proof}
%


%
\begin{proposition}\label{viability_fbm_corollary}
Under Assumption \ref{viability_assumption}, if $K$ is convex and compact, $b\in\normalfont{\textrm{Lip}}_{\normalfont{\textrm{loc}}}^{\gamma - 1}(\mathbb R^d)$ and $\sigma\in\normalfont{\textrm{Lip}}_{\normalfont{\textrm{loc}}}^{\gamma - 1}(\mathbb R^d,\mathcal M_{d,e}(\mathbb R))$ with $\gamma > 1/\alpha$, then all the solutions of the rough differential equation $dY = f_{b,\sigma}(Y)d\mathbb W$ with $y_0\in K$ as initial condition are defined on $[0,T]$ and viable in $K$.
\end{proposition}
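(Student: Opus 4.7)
The plan is to recognize this as the pathwise counterpart of Corollary \ref{sufficient_viability_corollary} applied to the enhanced fractional Brownian motion $\mathbb W$ constructed just above the statement. Indeed, by Friz and Victoir's Theorem 15.33 together with Theorem 9.26 (invoked in the paragraph preceding the proposition), the enhanced signal $\mathbb W$ takes values in $G\Omega_{\alpha,T}(\mathbb R^{e+1})$ for any $\alpha\in (0,H)$, so Assumption \ref{geometric_rough_path_w} is fulfilled almost surely with $W=\mathbb W^{(1)}$.

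I would then fix once and for all a full-measure event $\Omega_0$ on which $\mathbb W(\omega)\in G\Omega_{\alpha,T}(\mathbb R^{e+1})$, and work pathwise. For any $\omega\in\Omega_0$, all the hypotheses of Corollary \ref{sufficient_viability_corollary} are in force: $K$ is nonempty, convex and compact, $b$ and $\sigma$ are locally $(\gamma-1)$-Lipschitz continuous with $\gamma>1/\alpha$, Assumption \ref{viability_assumption} holds, and the driving rough path $\mathbb W(\omega)$ lies in $G\Omega_{\alpha,T}(\mathbb R^{e+1})$. Applying Corollary \ref{sufficient_viability_corollary} with this rough path as driver yields that every solution of $dY=f_{b,\sigma}(Y)d\mathbb W(\omega)$ starting from $y_0\in K$ is defined on $[0,T]$ and viable in $K$.

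Since this conclusion holds for every $\omega$ in the full-measure event $\Omega_0$, it holds almost surely, which is exactly what the proposition asserts. There is essentially no obstacle here: the work has already been carried out in Corollary \ref{sufficient_viability_corollary} (which handles the non-explosion via Friz-Victoir Theorem 10.21 and the viability via Theorem \ref{viability_theorem}.(2a)), and the only thing to verify here is that the specific signal $\mathbb W$ constructed from the fractional Brownian motion satisfies the general hypotheses of that corollary pathwise, which is immediate from the rough lift result quoted above.
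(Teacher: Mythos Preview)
Your proposal is correct and takes essentially the same approach as the paper, which simply records the proof as a ``straightforward application of Corollary \ref{sufficient_viability_corollary}.'' Your additional remarks about fixing a full-measure event and working pathwise merely spell out what that one-line justification means in the stochastic setting.
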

%


%
\begin{proof}
Straightforward application of Corollary \ref{sufficient_viability_corollary}.
\end{proof}
%


%
\begin{remark}\label{classic_SDE}
\white .\black
\begin{enumerate}
 \item The Brownian motion is a fractional Brownian motion of Hurst parameter $H = 1/2$.
 \item The rough differential equations driven by a Brownian motion are stochastic differential equations in the sense of Stratonovich. Let $B$ be an $e$-dimensional Brownian motion. In order to consider the stochastic differential equation
 \begin{displaymath}
 dy_t = b(y_t)dt +\sigma(y_t)dB_t
 \end{displaymath}
 in the sense of It\^o, one has to consider the rough differential equation
 \begin{displaymath}
 dy_t =\left[b(y_t) -
 \frac{1}{2}\sum_{i,j = 1}^{e}\sigma_{.,i}\sigma_{.,j}(y_t)\right]dt +
 \sigma(y_t)dB_t
 \end{displaymath}
 where,
 \begin{displaymath}
 \sigma_{.,i}\sigma_{.,j} :=
 \sum_{k = 1}^{d}\sigma_{k,i}\partial_k \sigma_{.,j}
 \textrm{ $;$ }
 \forall i,j\in\llbracket 1,e\rrbracket.
 \end{displaymath}
 (see Friz and Victoir \cite{FV10}, p. 510, Equation (17.3)).
\end{enumerate}
\end{remark}


%
\subsection{A logistic equation driven by a fractional Brownian motion}
The logistic equation is a typical example of differential equation with a non-Lipschitz vector field, but with solutions viable in a nonempty convex and compact subset of $\mathbb R^d$.
\\
\\
Consider $K := [0,1]^d$, $\gamma > 1/\alpha$, a locally $(\gamma - 1)$-Lipschitz continuous map $\sigma :\mathbb R^d\rightarrow\mathcal M_{d,e}(\mathbb R)$ such that $K\subset K_{\sigma}\cap K_{-\sigma}$, $m\in\mathbb R^d$ and $b_m :\mathbb R^d\rightarrow\mathbb R^d$ the map defined by :
\begin{displaymath}
b_{m}^{(i)}(x) :=
m^{(i)}x^{(i)}(1 - x^{(i)})
\textrm{ $;$ }
\forall i\in\llbracket 1,d\rrbracket
\textrm{$,$ }
\forall x\in\mathbb R^d.
\end{displaymath}
The set $K$ is a nonempty compact convex polyhedron of $\mathbb R^d$. Indeed,
\begin{displaymath}
K = K_1\cap K_2
\end{displaymath}
with
\begin{displaymath}
K_1 :=
\bigcap_{i = 1}^{d}\{x\in\mathbb R^d :\langle -e_i,x\rangle\leqslant 0\}
\end{displaymath}
and
\begin{displaymath}
K_2 :=
\bigcap_{i = 1}^{d}\{x\in\mathbb R^d :\langle e_i,x - e_i\rangle\leqslant 0\}.
\end{displaymath}
Consider $i\in\llbracket 1,d\rrbracket$ and $x\in\mathbb R^d$. If $\langle -e_i,x\rangle = 0$, then $x^{(i)} = 0$ and $\langle b_m(x),-e_i\rangle = 0$. If $\langle e_i,x - e_i\rangle = 0$, then $x^{(i)} = 1$ and $\langle b_m(x),e_i\rangle = 0$. Therefore, $K\subset K_{b_m}$. In other words, $b_m$ and $\sigma$ satisfy Assumption \ref{viability_assumption}.
\\
\\
Consider $y_0\in K$. Since $K$ is convex and compact, by Proposition \ref{viability_fbm_corollary}, the logistic equation
\begin{displaymath}
Y_t = y_0 +\int_{0}^{t}b_m(Y_s)ds +\int_{0}^{t}\sigma(Y_s)dB_s
\textrm{ $;$ }
t\in [0,T]
\end{displaymath}
has at least one solution defined on $[0,T]$ and viable in $K$. For instance, one can put
\begin{displaymath}
\sigma(x) :=
\sum_{i = 1}^{d}x^{(i)}(1 - x^{(i)})e_{i,i}
\textrm{ $;$ }
\forall x\in\mathbb R^d.
\end{displaymath}
\appendix
%


%
\section{Tangent and normal cones}
This Appendix is a brief survey on convex analysis.
\\
\\
The definitions and propositions stated in this subsection come from Hiriart-Urrut and Lemar\'echal \cite{HUL01}, Chapter A, and Aubin et al. \cite{ABSP11}, Chapter 18.
\\
\\
First, let us define the polar and bipolar sets of a closed cone.
%


%
\begin{definition}\label{polar_bipolar_cone}
\white .\black
\begin{enumerate}
 \item The polar set of a closed cone $K\subset\mathbb R^d$ is the closed cone
 \begin{displaymath}
 K^{\circ} =
 \{s\in\mathbb R^d :
 \forall\delta\in K
 \textrm{$,$ }
 \langle s,\delta\rangle\leqslant 0\}.
 \end{displaymath}
 \item The bipolar set of $K$ is the closed cone $K^{\circ\circ} := (K^{\circ})^{\circ}$.
\end{enumerate}
\end{definition}
\noindent
Let us now define the tangent and normal cones to a nonempty closed set $S\subset\mathbb R^d$ at $x\in S$.
%


%
\begin{definition}\label{tangent_normal_cone}
\white .\black
\begin{enumerate}
 \item A vector $\delta\in\mathbb R^d$ is tangent to $S$ at $x$ if and only if there exists a sequence $(x_n)_{n\in\mathbb N}$ of elements of $S$, and a real sequence $(t_n)_{n\in\mathbb N}$ such that when $n\rightarrow\infty$,
 \begin{displaymath}
 \|x_n - x\|\rightarrow 0
 \textrm{$,$ }
 t_n\downarrow 0
 \textrm{ and }
 \left\|
 \frac{x_n - x}{t_n} -\delta\right\|
 \rightarrow 0.
 \end{displaymath}
 The set of the tangent vectors to $S$ at $x$ is a closed cone of $\mathbb R^d$, called the tangent cone to $S$ at $x$, and denoted by $T_S(x)$.
 \item A vector $s\in\mathbb R^d$ is normal to $S$ at $x$ if and only if,
 \begin{displaymath}
 \langle s,\delta\rangle\leqslant 0
 \textrm{ $;$ }
 \forall\delta\in T_S(x).
 \end{displaymath}
 The set of the normal vectors to $S$ at $x$ is the normal cone to $S$ at $x$, denoted by $N_S(x)$.
\end{enumerate}
\end{definition}
%


%
\begin{remark}\label{tangent_cone_interior}
If $x\in\normalfont{\textrm{int}}(S)$, then $T_S(x) = T_S(x)^{\circ\circ} =\mathbb R^d$.
\end{remark}
\noindent
The two following properties are crucial in the proof of Theorem \ref{viability_theorem}.
%


%
\begin{proposition}\label{orthogonal_projections_closed}
For every $y\in\mathbb R^d$ and $y^*\in\Pi_S(y)$ (see (\ref{set_of_projections}) for a definition), $y - y^*\in T_S(y^*)^{\circ}$.
\end{proposition}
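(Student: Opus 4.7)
The plan is to unpack both definitions and exploit the minimality of $y^*$ via the standard expansion of the squared Euclidean norm. Let $\delta \in T_S(y^*)$ be arbitrary; I want to show $\langle y - y^*, \delta\rangle \leqslant 0$, and then Definition \ref{tangent_normal_cone}/Definition \ref{polar_bipolar_cone} gives $y - y^* \in T_S(y^*)^\circ$.

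First I would pick witnesses of $\delta \in T_S(y^*)$: a sequence $(x_n)_{n \in \mathbb N}$ in $S$ and $t_n \downarrow 0$ with $\|x_n - y^*\| \to 0$ and $(x_n - y^*)/t_n \to \delta$. Then, since $y^* \in \Pi_S(y)$ and $x_n \in S$, I have $\|y - x_n\| \geqslant \|y - y^*\|$. Squaring and expanding via the identity $\|y - x_n\|^2 = \|y - y^*\|^2 - 2\langle y - y^*, x_n - y^*\rangle + \|x_n - y^*\|^2$ produces
\begin{displaymath}
\langle y - y^*, x_n - y^*\rangle \leqslant \tfrac{1}{2}\|x_n - y^*\|^2.
\end{displaymath}

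Next I would divide by $t_n > 0$ to obtain
\begin{displaymath}
\left\langle y - y^*,\; \frac{x_n - y^*}{t_n}\right\rangle \leqslant \frac{t_n}{2}\left\|\frac{x_n - y^*}{t_n}\right\|^2.
\end{displaymath}
Passing to the limit $n \to \infty$, the left-hand side converges to $\langle y - y^*, \delta\rangle$ by continuity of the inner product, while the right-hand side converges to $0$ because $t_n \to 0$ and $\|(x_n - y^*)/t_n\| \to \|\delta\|$ (so the factor in the norm stays bounded).

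Hence $\langle y - y^*, \delta\rangle \leqslant 0$ for every $\delta \in T_S(y^*)$, which is exactly the defining condition $y - y^* \in T_S(y^*)^\circ$. I do not anticipate any real obstacle here: the whole proof is the well-known first-order projection inequality, and the only mildly delicate point is making sure to divide by $t_n$ before taking the limit (rather than afterwards), so that the quadratic remainder $\|x_n - y^*\|^2$ becomes $t_n$ times a bounded quantity and disappears in the limit.
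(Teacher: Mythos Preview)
Your argument is correct and complete: the key inequality $\langle y - y^*, x_n - y^*\rangle \leqslant \tfrac{1}{2}\|x_n - y^*\|^2$ follows exactly from the minimality of $y^*$ and the expansion you wrote, and dividing by $t_n$ before passing to the limit handles the quadratic remainder cleanly. The paper itself does not give a proof of this proposition but simply refers to Aubin \cite{AUBIN91}, Proposition 3.2.3; your direct argument is precisely the standard one underlying that reference, so there is nothing substantively different to compare.
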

\noindent
See \cite {AUBIN91}, Proposition 3.2.3 p. 85.
%


%
%
%
%
%
%
%
%
%
%


%
\begin{proposition}\label{tangent_cone_compact_sets}
If $S$ is compact, then
\begin{displaymath}
T_S(x)^{\circ} =
\{s\in\mathbb R^d :
\forall\varepsilon > 0\textrm{$,$ }
\exists\delta > 0\textrm{$,$ }
\forall y\in S\cap B_d(x,\delta)\textrm{$,$ }
\langle y - x,s\rangle
\leqslant
\varepsilon\|y - x\|\}.
\end{displaymath}
\end{proposition}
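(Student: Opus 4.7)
The plan is to prove the equality by a double inclusion, denoting the right-hand side by $A$. Throughout, compactness of $S$ is only used to ensure that bounded sequences of vectors $\frac{y_n - x}{\|y_n - x\|}$ live in (a closed subset of) the unit sphere, so that Bolzano--Weierstrass applies; all the analytic content comes from the definitions of $T_S(x)$ and of the polar cone.

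\textbf{Step 1 (the inclusion $A\subset T_S(x)^{\circ}$).} This is the easier direction. Take $s\in A$ and an arbitrary $\delta\in T_S(x)$, with associated sequences $(x_n)\subset S$ and $t_n\downarrow 0$ satisfying $\|x_n-x\|\to 0$ and $\|(x_n-x)/t_n-\delta\|\to 0$. Fix $\varepsilon>0$; by definition of $A$, there exists $\delta_0>0$ such that $\langle y-x,s\rangle\leqslant\varepsilon\|y-x\|$ for every $y\in S\cap B_d(x,\delta_0)$. For $n$ large enough, $x_n\in B_d(x,\delta_0)$, so dividing by $t_n>0$ gives
\begin{displaymath}
\left\langle\frac{x_n-x}{t_n},s\right\rangle
\leqslant
\varepsilon\frac{\|x_n-x\|}{t_n}.
\end{displaymath}
Passing to the limit yields $\langle\delta,s\rangle\leqslant\varepsilon\|\delta\|$, and letting $\varepsilon\to 0$ gives $\langle\delta,s\rangle\leqslant 0$. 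Since $\delta\in T_S(x)$ was arbitrary, $s\in T_S(x)^{\circ}$.

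\textbf{Step 2 (the inclusion $T_S(x)^{\circ}\subset A$).} I would argue by contradiction. Suppose $s\in T_S(x)^{\circ}\setminus A$: then there exists $\varepsilon_0>0$ such that, taking $\delta=1/n$, one can pick $y_n\in S\cap B_d(x,1/n)$ with $\langle y_n-x,s\rangle>\varepsilon_0\|y_n-x\|$. Necessarily $y_n\neq x$, so set $t_n:=\|y_n-x\|\downarrow 0$ and $\delta_n:=(y_n-x)/t_n\in\partial B_d(0,1)$. This is where compactness enters: since $\{\delta_n\}$ lives in the compact unit sphere, extract a subsequence $\delta_{n_k}\to\delta$ with $\|\delta\|=1$. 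Then $(y_{n_k},t_{n_k})$ is exactly a defining pair for $\delta$ in the sense of Definition \ref{tangent_normal_cone}, so $\delta\in T_S(x)$. Dividing the violated inequality by $t_{n_k}$ and taking $k\to\infty$ gives $\langle\delta,s\rangle\geqslant\varepsilon_0>0$, contradicting $s\in T_S(x)^{\circ}$.

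\textbf{Main obstacle.} There is no serious technical obstacle here; the argument is a routine Bouligand-tangent/Fr\'echet-normal polarity computation. The one subtlety is making sure in Step 2 that the extracted limit $\delta$ is indeed a legitimate tangent vector in the paper's sense, which reduces to checking that $t_n=\|y_n-x\|\to 0$ and $(y_n-x)/t_n\to\delta$ are simultaneous requirements satisfied by our construction; compactness of $S$ (or merely local compactness near $x$) ensures that the normalization step produces a sequence with a cluster point.
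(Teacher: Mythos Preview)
Your proof is correct. Note, however, that the paper does not actually prove this proposition: it sits in Appendix~A, which is explicitly billed as ``a brief survey on convex analysis'' with results drawn from Hiriart-Urruty and Lemar\'echal \cite{HUL01} and Aubin et al.\ \cite{ABSP11}, and the statement is given without proof (just as the neighbouring propositions are either referred to \cite{AUBIN91} or left unproved). So there is nothing to compare your argument against.

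One remark on your write-up: you attribute the extraction step in Step~2 to the compactness of $S$, but in fact compactness plays no role there. The vectors $\delta_n=(y_n-x)/\|y_n-x\|$ lie on the unit sphere of $\mathbb R^d$ by construction, and the unit sphere is compact regardless of any hypothesis on $S$; Bolzano--Weierstrass applies automatically. Your argument therefore establishes the identity for any closed set $S$ (indeed, this is the standard equality between the Fr\'echet normal cone and the polar of the Bouligand tangent cone). The compactness assumption in the proposition is not needed for the characterisation itself; the authors presumably state it that way only because they invoke the proposition in the proof of Theorem~\ref{viability_theorem}.(3), where $K$ is compact.
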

\noindent
The two last propositions provide some properties of the tangent and normal cones when $S$ is a nonempty closed convex set.
%


%
\begin{proposition}\label{tangent_cone_properties}
The tangent cone $T_S(x)$ is a closed convex cone such that $S\subset\{x\} + T_S(x)$.
\end{proposition}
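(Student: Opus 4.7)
The plan is to first establish the inclusion $S \subset \{x\} + T_S(x)$ directly from the definition using the convexity of $S$, and then deduce convexity of the tangent cone from a characterization of $T_S(x)$ as the closure of the positive cone generated by $S - x$.

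For the inclusion, I would take an arbitrary $y \in S$ and exploit convexity to produce a sequence approaching $x$ along the segment $[x,y]$. Concretely, set $x_n := x + \frac{1}{n}(y - x) = \left(1 - \frac{1}{n}\right) x + \frac{1}{n} y$, which lies in $S$ by convexity since $x, y \in S$, and $t_n := 1/n$. Then $\|x_n - x\| \to 0$, $t_n \downarrow 0$, and $(x_n - x)/t_n = y - x$ for every $n$, so the definition of tangent vector in Definition \ref{tangent_normal_cone} gives $y - x \in T_S(x)$. Hence $y \in \{x\} + T_S(x)$, and since $y$ was arbitrary, $S \subset \{x\} + T_S(x)$.

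For convexity of $T_S(x)$, I would prove the identity
\begin{displaymath}
T_S(x) \;=\; \overline{\mathbb{R}_+(S - x)} \;:=\; \overline{\{t(y - x) \,:\, t \geqslant 0,\; y \in S\}}.
\end{displaymath}
The inclusion $\overline{\mathbb{R}_+(S-x)} \subset T_S(x)$ follows from the first part (which gives $S - x \subset T_S(x)$), combined with the fact that $T_S(x)$ is a closed cone (so it contains $\mathbb{R}_+(S-x)$ and then its closure). Conversely, if $\delta \in T_S(x)$ with approximating sequences $(x_n)$ in $S$ and $(t_n)$ with $t_n > 0$, then $(x_n - x)/t_n \in \mathbb{R}_+(S-x)$ for each $n$, and passing to the limit shows $\delta \in \overline{\mathbb{R}_+(S-x)}$.

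Granted this identity, the convexity of $T_S(x)$ reduces to the convexity of $\mathbb{R}_+(S - x)$, since closures of convex sets are convex. Given two elements $t_1(y_1 - x)$ and $t_2(y_2 - x)$ with $t_i \geqslant 0$ and $y_i \in S$, and $\lambda \in [0,1]$, the convex combination equals $\lambda t_1 (y_1 - x) + (1-\lambda) t_2 (y_2 - x)$; if $T := \lambda t_1 + (1-\lambda) t_2 = 0$ this is $0 \in \mathbb{R}_+(S - x)$, and otherwise it equals $T(z - x)$ where $z := (\lambda t_1 y_1 + (1-\lambda) t_2 y_2)/T \in S$ by convexity of $S$. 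This puts the combination back in $\mathbb{R}_+(S - x)$, completing the convexity argument.

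The only slightly delicate point is the radial-cone identification, since the definition of $T_S(x)$ in the paper is via sequences with $x_n \to x$ rather than general rays; the first step of the argument (producing the segment) is precisely what forces $S - x \subset T_S(x)$ and bypasses this subtlety. Everything else is routine manipulation, so no genuine obstacle is expected.
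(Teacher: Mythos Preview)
The paper does not actually prove this proposition; it is stated in Appendix A as a standard fact from convex analysis, with the appendix referring globally to Hiriart-Urruty and Lemar\'echal \cite{HUL01} and Aubin et al.\ \cite{ABSP11} for all such results. Your argument is the standard textbook proof and is correct: the segment construction gives $S-x\subset T_S(x)$ (this is where convexity of $S$ enters), the sequential definition immediately yields $T_S(x)\subset\overline{\mathbb R_+(S-x)}$, and the convexity of the radial cone $\mathbb R_+(S-x)$ follows by the barycentric computation you wrote. Closedness and the cone property of $T_S(x)$ are already asserted as part of Definition \ref{tangent_normal_cone}, so nothing further is needed there.
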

%


%
\begin{proposition}\label{normal_cone_convex}
A vector $s\in\mathbb R^d$ is normal to $S$ at $x$ if and only if,
\begin{displaymath}
\langle s,y - x\rangle
\leqslant 0
\textrm{ $;$ }
\forall y\in S.
\end{displaymath}
\end{proposition}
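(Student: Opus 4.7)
The plan is to prove the two implications separately, using the definition of the tangent and normal cones together with the inclusion $S\subset\{x\}+T_S(x)$ provided by Proposition \ref{tangent_cone_properties} for the convex case.

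For the direct implication, assume $s\in N_S(x)$ and pick $y\in S$. By Proposition \ref{tangent_cone_properties}, $y-x\in T_S(x)$, so the definition of $N_S(x)$ gives $\langle s,y-x\rangle\leqslant 0$. This step is essentially just a rewriting via the convexity-dependent inclusion $S\subset\{x\}+T_S(x)$.

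For the converse implication, suppose $\langle s,y-x\rangle\leqslant 0$ for every $y\in S$, and pick an arbitrary $\delta\in T_S(x)$. By the definition of the tangent cone, there exist sequences $(x_n)_{n\in\mathbb N}$ in $S$ and $(t_n)_{n\in\mathbb N}$ with $t_n\downarrow 0$, $\|x_n-x\|\to 0$, and $(x_n-x)/t_n\to\delta$. Writing
\begin{displaymath}
\left\langle s,\frac{x_n-x}{t_n}\right\rangle
=\frac{1}{t_n}\langle s,x_n-x\rangle\leqslant 0
\end{displaymath}
by the hypothesis applied to $y=x_n\in S$, and passing to the limit $n\to\infty$, yields $\langle s,\delta\rangle\leqslant 0$. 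Since $\delta\in T_S(x)$ was arbitrary, $s\in N_S(x)$.

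The argument is essentially symmetric and elementary; the only nontrivial ingredient is the inclusion $S\subset\{x\}+T_S(x)$, which is precisely where convexity of $S$ is needed (it can fail for non-convex closed sets). Accordingly, I do not expect any real obstacle: the ``harder'' direction is the first one, and it reduces to a one-line citation of Proposition \ref{tangent_cone_properties}, while the converse is a straightforward limit in the defining sequence of a tangent vector.
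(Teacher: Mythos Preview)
Your argument is correct. Note, however, that the paper does not actually supply a proof of this proposition: it is stated in the appendix as a standard fact from convex analysis, with the references \cite{HUL01} and \cite{ABSP11} given at the start of Appendix~A. Your proof is the standard one and matches what one finds in those references; in particular, your use of Proposition~\ref{tangent_cone_properties} for the forward implication is exactly where convexity enters, while the converse implication holds for any closed set.
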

%


%

%
\end{document}